\newcommand{\tensor}[1]{\boldsymbol{\mathcal{#1}}}
\newcommand{\mat}[1]{\mathbf{#1}}
\newcommand{\vect}[1]{\mathbf{#1}}
\theoremstyle{plain}
\newtheorem{theorem}{Theorem}[section]
\theoremstyle{definition}
\newtheorem{definition}[theorem]{Definition}
\theoremstyle{remark}
\begin{document}

\title{Tensor Ring Decomposition}

\author{Qibin Zhao, 
        Guoxu Zhou,
        Shengli Xie,
        Liqing Zhang, 
        Andrzej Cichocki ~\IEEEmembership{Fellow,~IEEE} 
\IEEEcompsocitemizethanks{\IEEEcompsocthanksitem Q. Zhao is with Laboratory for Advanced Brain Signal Processing, RIKEN Brain Science Institute, Japan.
\IEEEcompsocthanksitem G. Zhou is with the School of Automation at Guangdong University of Technology,
Guangzhou 510006.
\IEEEcompsocthanksitem S. Xie is with the School of Automation at Guangdong University of Technology,
Guangzhou 510006.
\IEEEcompsocthanksitem L. Zhang is with MOE-Microsoft Laboratory for Intelligent Computing and Intelligent Systems and
Department of Computer Science and Engineering, Shanghai Jiao Tong University, China.
\IEEEcompsocthanksitem A. Cichocki is with Laboratory for Advanced Brain Signal Processing,
RIKEN Brain Science Institute, Japan and Systems Research Institute in Polish Academy of Science,  Warsaw,  Poland.
}
\thanks{} }

\markboth{Q. Zhao \MakeLowercase{\textit{et al.}} Tensor Ring Decomposition}%
{}

\IEEEcompsoctitleabstractindextext{%

\begin{abstract}
Tensor networks have in recent years emerged as the powerful tools for solving the large-scale optimization problems. One of the most popular tensor network is tensor train (TT) decomposition that acts as the building blocks for the complicated tensor networks. However, the TT decomposition highly depends on permutations of tensor dimensions, due to its strictly sequential multilinear products over latent cores, which leads to difficulties in finding the optimal TT representation. In this paper, we introduce a fundamental tensor decomposition model to represent a large dimensional tensor by a circular multilinear products over a sequence of low dimensional cores, which can be graphically interpreted as a cyclic interconnection of 3rd-order tensors, and thus termed as tensor ring (TR) decomposition.  The key advantage of TR model is the circular dimensional permutation invariance which is gained by employing the trace operation and treating the latent cores equivalently. TR model can be viewed as a linear combination of TT decompositions, thus obtaining the powerful and generalized representation abilities. For optimization of latent cores, we present four different algorithms based on the sequential SVDs, ALS scheme, and block-wise ALS techniques. Furthermore, the mathematical properties of TR model are investigated, which shows that the basic multilinear algebra can be performed efficiently by using TR representations and the classical tensor decompositions can be conveniently transformed into the TR representation. Finally, the experiments on both synthetic signals and real-world datasets were conducted to evaluate the performance of different algorithms.
\end{abstract}
\begin{IEEEkeywords}
Tensor ring decomposition, tensor train decomposition, tensor networks, CP decomposition, Tucker decomposition, tensor rank
\end{IEEEkeywords}}
\maketitle
\IEEEdisplaynotcompsoctitleabstractindextext
\IEEEpeerreviewmaketitle

\IEEEraisesectionheading{\section{Introduction}\label{sec:introduction}}
\IEEEPARstart{T}{ensor}
decompositions aim to represent a higher-order (or high-dimensional) tensor data by  multilinear operations over the latent factors, which have attracted considerable attentions in a  variety of fields including machine learning, signal processing, psychometric, chemometrics, quantum physics and brain science~\cite{Kolda09,Cichocki2015}. Specifically, the Canonical Polyadic decomposition (CPD)~\cite{bro1997parafac,DeLathauwer:2006:Link,Comon09CPD, CPD-Comon15} can approximate an observed tensor by a sum of rank-one tensors, which requires $\mathcal{O}(dnr)$ parameters. The Tucker decomposition~\cite{tucker1966some,HOOI:Lathauwer:2000,Oseledets2008} attempts to approximate tensors by a core tensor and several factor matrices, which requires $\mathcal{O}(dnr + r^d)$ parameters. These two models have been widely investigated and applied in various real-world applications, e.g.,~\cite{YokotaZCY15,caiafa2013computing, HeZ2010-2,Acar2006,Beckmann2005,Lee2009,qiinfinite, ZhouIP15, Zhou-PIEEE, Yu2012tensorDA,Miwakeichi,zhao2015bayesian,zhao2016bayesianrobust}. In general, CPD provides a compact representation but with difficulties in finding the optimal solution, while Tucker is stable and flexible but its number of parameters scales exponentially to the tensor order.

Recently, tensor networks, considered as the generalization of tensor decompositions, have emerged as the potentially powerful tools for analysis of large-scale tensor data~\cite{Orus2013,Huckle2013,Sachdev09,Murg-TTNS15}. The main concept is to transform a large-scale optimization problem into a set of small-scale tractable optimization problems, which can be achieved by representing a higher-order tensor as the interconnected lower-order tensors~\cite{CichockiSISA,hubener2010concatenated,espig2011optimization}. The most popular tensor network is \emph{tensor train / matrix product states} (TT/MPS) representation, which requires $\mathcal{O}(dnr^2)$ parameters and thus potentially allows for the efficient treatment of a higher dimensional tensor~\cite{oseledets2010tt, oseledets2011tensor, Holtz-TT-2012}. Another important tensor network is the hierarchical Tucker (HT) format~\cite{hTucker1,hTucker}, in which a recursive, hierarchical construction of Tucker type is employed for low-dimensional representation. The usefulness of these tensor formats is currently being investigated for a variety of high-dimensional problems~\cite{MRF-TT14,Dolgov2012fast}.  For example, TT-format has be successfully applied to different kinds of large-scale problems in numerical analysis, which includes the optimization of Rayleigh quotient, e.g., \emph{density matrix renormalization group} (DMRG)~\cite{jeckelmann2002dynamical,dolgovEIG2013}, the eigenvalue or singular value problem~\cite{kressner2014low, Lee-SIMAX-SVD}, and the approximate solution of linear systems, e.g., \emph{alternating minimal energy} (AMEn)~\cite{dolgov2014alternating}. One main advantage of HT and TT formats lies in the fact that the representation of higher-order tensors is reduced to $d$ tensors of order at most 3. Hence, the HT/TT formats are thus formally free from the cures of dimensionality. At the same time, new tensor formats have been proposed, e.g. the \emph{quantized tensor train} (QTT)~\cite{khoromskij2011dlog,oseledets2010approximation} and the QTT-Tucker~\cite{dolgov2013two}, which have been also applied to large-scale optimization problems~\cite{lebedeva2011tensor,dolgov2014alternating,dolgov2012superfast}.

Since TT decomposition acts as the building blocks for the complicated tensor networks, it is essentially important and necessary to investigate its properties deeply.   The principle of TT decomposition is to approximate each tensor element by a sequential products of matrices, where the first and the last matrices are vectors to ensure the scalar output~\cite{oseledets2011tensor}. It was shown in~\cite{oseledets2010tt} that TT decomposition with minimal possible compression ranks always exists and can be computed by a sequence of SVD decompositions, or by the cross approximation algorithm. In \cite{Holtz-TT-2012,rohwedder2013local}, TT decomposition is optimized by a suitable generalization of the alternating least squares (ALS) algorithm and modified ALS (MALS) algorithm which facilitates the self-adaptation of ranks either by using SVDs or by employing a greedy algorithm.  The tensor completion by optimizing the low-rank TT representations can be achieved by alternating directions fitting~\cite{Grasedyck15TTcompl} or by nonlinear conjugate gradient scheme within the framework of Riemannian optimization~\cite{steinlechner2015riemannian}. Although TT format has been widely applied in numerical analysis and mathematic field, there are only few studies addressing its applications to real dataset in machine learning field, such as image classification and completion~\cite{novikov2015tensorizing,benguamatrix,phien2016efficient}. The limitations of TT decomposition include that i) the constraint on TT-ranks, i.e., $r_1=r_{d+1}=1$, leads to the limited representation ability and flexibility; ii) TT-ranks always have a fixed pattern, i.e., smaller for the border cores and larger for the middle cores, which might not be the optimum for specific data tensor;  iii) the multilinear products of cores in TT must follow a strict order such that the optimized TT cores highly depend on the permutation of tensor dimensions.  Hence, finding the optimal permutation remains a challenging problem.

By taking into account these limitations of TT decomposition, we introduce a new type of tensor decomposition which can be considered as a generalization of the TT model. First of all, we consider to relax the condition over TT-ranks, i.e., $r_1=r_{d+1}=1$, leading to the enhanced representation ability. Secondly, the strict ordering of multilinear products between cores should be alleviated. Third, the cores should be  treated equivalently by making the model symmetric. To this end, we found these goals can be achieved by simply employing the trace operation. More specifically, we consider that each tensor element is approximated by performing a trace operation over the sequential multilinear products of cores. Since the trace operation ensures a scalar output, $r_1=r_{d+1}=1$ is not necessary. In addition, the cores can be circularly shifted and treated equivalently due to the properties of trace operation. By using the graphical illustration (see Fig.~\ref{fig:TRD}), this concept implies that the cores are interconnected circularly, which looks like a ring structure. Hence, we call this model as tensor ring (TR) decomposition and its cores as tensor ring (TR) representations. Although the similar concept has been mentioned and called MPS or tensor chain in few literatures~\cite{khoromskij2011dlog, espig2011optimization,perez2006matrix}, the algorithms and properties have not well explored yet. In this paper, the optimization algorithms for TR decomposition will be investigated, whose objective is to represent a higher-order tensor by the TR format that is potentially powerful for large-scale multilinear optimization problems.

The paper is organized as follows. In Section \ref{sec:trm}, the TR model is presented in several different forms together with its basic feature. Section \ref{sec:alg} presents four different algorithms for TR decomposition. In Section \ref{sec:property}, we demonstrate how the basic multilinear algebra can be performed by using the TR format. The relations with existing tensor decompositions are presented in Section \ref{sec:relation}.  Section \ref{sec:experiment} shows experimental results on both synthetic and real-world dataset, followed by conclusion in Secition \ref{sec:conclusion}.


\section{Tensor Ring Model}
\label{sec:trm}
The tensor ring (TR) decomposition aims to represent a high-order (or high-dimensional) tensor by a sequence of 3rd-order tensors that are multiplied circularly. Specifically, let $\tensor{T}$ be a $d$th-order  tensor of size $n_1\times n_2\times \cdots\times n_d$, denoted by $\tensor T\in\mathbb{R}^{n_1\times \cdots\times n_d}$, TR representation is to decompose it into a sequence of latent tensors $\tensor Z_k\in\mathbb{R}^{r_k\times n_k\times r_{k+1}}, k=1,2,\ldots, d$, which can be expressed in an element-wise form given by
\begin{equation}
\label{eq:TRD1}
\begin{split}
T(i_1,i_2,\ldots,i_d) =& \text{Tr}\left\{\mat Z_1(i_1)\mat Z_2(i_2)\cdots \mat Z_d(i_d)\right\} ,\\
=& \text{Tr}\left\{\prod_{k=1}^d \mat Z_k(i_k)\right\}.
\end{split}
\end{equation}
$T(i_1,i_2,\ldots,i_d)$ denotes $(i_1,i_2,\ldots,i_d)$th element of the tensor. $\mat Z_k(i_k)$ denotes the $i_k$th lateral slice matrix of the latent tensor $\tensor Z_k$, which is of size $r_k\times r_{k+1}$. Note that any two adjacent latent tensors, $\tensor Z_k$ and $\tensor Z_{k+1}$, have an equivalent dimension $r_{k+1}$ on their corresponding mode. The last latent tensor $\tensor Z_d$ is of size $r_d\times n_d\times r_1$, i.e., $r_{d+1}=r_1$, which ensures the product of these matrices is a square matrix. These prerequisites play  key roles in TR decomposition, resulting in some important numeric properties.  For simplicity,  the latent tensor $\tensor Z_k$ can be also called $k$th-\emph{core} (or \emph{node}). The size of cores, $r_k, k=1,2,\ldots, d$,  collected and denoted by a vector $\vect r = [r_1, r_2,\ldots, r_d]^T$ are called \emph{TR-ranks}. From (\ref{eq:TRD1}), we can observe that the $T(i_1,i_2,\ldots,i_d)$ is equivalent to the trace of a sequential product of matrices $\{\mat Z_k(i_k)\}$. To further describe the concept, we can also rewrite (\ref{eq:TRD1})  in the index form, which is
\begin{multline}
\label{eq:TRD2}
T(i_1,i_2,\ldots,i_d) = \sum_{\alpha_1,\ldots,\alpha_d =1}^{r_1,\ldots,r_d}  \prod_{k=1}^d Z_k(\alpha_{k},i_k,\alpha_{k+1}). 
\end{multline}
Note that $\alpha_{d+1}=\alpha_1$  due to the trace operation.  $\forall k\in\{1,\ldots,d\}, 1\leq \alpha_k \leq r_k, 1\leq i_k \leq n_k$, where $k$ is the index of  tensor modes (dimensions); $\alpha_k$ is the index of  latent dimensions; and $i_k$ is the index of data dimensions. From (\ref{eq:TRD2}), we can also easily express TR decomposition in the tensor form, given by
\begin{equation}
\tensor T = \sum_{\alpha_1,\ldots,\alpha_d=1}^{r_1,\ldots,r_d}\mat z_1(\alpha_1,\alpha_2)\circ \mat z_2(\alpha_2,\alpha_3) \circ \cdots \circ \mat z_d(\alpha_{d},\alpha_1),
\end{equation}
where the symbol `$\circ$' denotes the outer product of vectors and $\mat z_k(\alpha_k,\alpha_{k+1})\in\mathbb{R}^{n_k}$ denotes the ($\alpha_k,\alpha_{k+1}$)th mode-2 fiber of tensor $\tensor Z_k$. This indicates that the whole tensor can be decomposed into a sum of rank-1 tensors that are generated by $d$ vectors taken from each core respectively. The number of parameters in TR representation is $\mathcal{O}(dnr^2)$, which is linear to the tensor order $d$.

The TR representation can be also illustrated graphically by a linear tensor network as shown in Fig.~\ref{fig:TRD}. The node represents a tensor (including matrix and vector) whose order is denoted by the number of edges. The number beside the edges specifies the size of each mode (or dimension). The connection between two nodes denotes a multilinear product operator between two tensors on a specific mode, also called tensor contraction, which corresponds to the summation over the indices of that mode. As we can see from (\ref{eq:TRD2}), $\tensor Z_1$  and $\tensor Z_2$ is multiplied along one dimension indexed by $\alpha_2$, which is thus denoted by a connection together with the size of that mode (i.e., $r_2$) in the graph. It should be noted that $\tensor Z_d$ is connected to $\tensor Z_1$ by the summation over the index $\alpha_1$, which corresponding to the trace operation.  From the graphical representation and mathematic expression in (\ref{eq:TRD1}),  we can easily derive that  TR representation is a circular multilinear products of a sequence of 3rd-order tensors, resulting in that the sequence can be shifted circularly without changing the result essentially, which corresponds to a circular shift of tensor modes. Since our model graphically looks like a ring and its multilinear operations can be circularly shifted, we thus  call it naturally as \emph{tensor ring decomposition}. For simplicity, we denote TR decomposition by $\tensor T = \Re(\tensor Z_1, \tensor Z_2, \ldots, \tensor Z_d)$.
\begin{figure}[htbp]
  \centering
  \includegraphics[width=1\columnwidth]{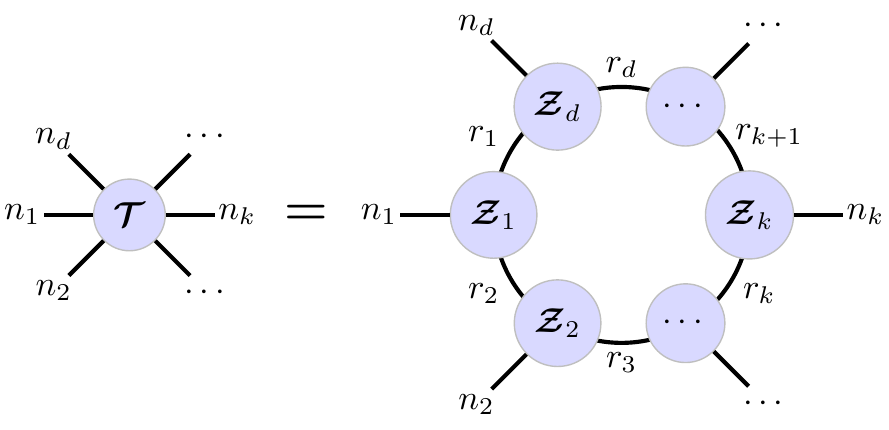}\\
  \caption{A graphical representation of the tensor ring decomposition }
  \label{fig:TRD}
\end{figure}

\begin{theorem}{\bf Circular dimensional permutation invariance}.
\label{theorem:invariance}
Let  $\tensor T\in\mathbb{R}^{n_1\times n_2\times \ldots\times n_d}$ be a $d$th-order tensor and its TR decomposition is given by  $\tensor T = \Re(\tensor Z_1, \tensor Z_2, \ldots, \tensor Z_d)$. If we define ${\overleftarrow {\tensor {T}}^k}\in \mathbb{R}^{n_{k+1}\times \cdots\times n_d\times n_1\times\cdots\times n_{k}}$ as circularly shifting the dimensions of $\tensor T$ by k, then we have ${\overleftarrow {\tensor {T}}^k} =\Re(\tensor Z_{k+1}, \ldots, \tensor Z_d, \tensor Z_{1},\ldots \tensor Z_k)$.
\end{theorem}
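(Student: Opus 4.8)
The plan is to prove the identity element-wise and reduce it to the cyclic invariance of the matrix trace, thereby making rigorous the informal observation made earlier that the core sequence ``can be shifted circularly without changing the result.'' First I would fix an arbitrary multi-index and write down both sides of the claimed equality using the element-wise form~(\ref{eq:TRD1}). By the definition of the circular shift, the $(i_{k+1},\ldots,i_d,i_1,\ldots,i_k)$th entry of ${\overleftarrow{\tensor{T}}^{k}}$ is by construction equal to the $(i_1,\ldots,i_d)$th entry of $\tensor T$, which~(\ref{eq:TRD1}) expresses as $\text{Tr}\{\mat Z_1(i_1)\mat Z_2(i_2)\cdots\mat Z_d(i_d)\}$. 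On the other side, applying the same TR element-wise formula to the reordered core sequence $(\tensor Z_{k+1},\ldots,\tensor Z_d,\tensor Z_1,\ldots,\tensor Z_k)$ shows that the corresponding entry of $\Re(\tensor Z_{k+1},\ldots,\tensor Z_d,\tensor Z_1,\ldots,\tensor Z_k)$ equals $\text{Tr}\{\mat Z_{k+1}(i_{k+1})\cdots\mat Z_d(i_d)\mat Z_1(i_1)\cdots\mat Z_k(i_k)\}$. Thus the theorem reduces to showing these two traces coincide for every choice of indices.

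The key step is then the cyclic property of the trace, $\text{Tr}(AB)=\text{Tr}(BA)$. I would group the full product into the two blocks $A=\mat Z_1(i_1)\cdots\mat Z_k(i_k)$ and $B=\mat Z_{k+1}(i_{k+1})\cdots\mat Z_d(i_d)$, so that the original trace reads $\text{Tr}(AB)$ while the shifted trace reads $\text{Tr}(BA)$; the two agree by cyclic invariance. Since the multi-index was arbitrary, this entrywise agreement immediately yields the claimed tensor identity.

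The only point requiring care---and what I expect to be the main, though essentially bookkeeping-level, obstacle---is verifying that the block product $BA$ is a well-defined square matrix so that its trace is meaningful, together with tracking the index correspondence of the shift accurately. Here I would invoke the rank-compatibility structure built into the TR model: each slice $\mat Z_k(i_k)$ has size $r_k\times r_{k+1}$ with the cyclic closure $r_{d+1}=r_1$, so that $A$ is $r_1\times r_{k+1}$, $B$ is $r_{k+1}\times r_1$, and $AB$, $BA$ are square of sizes $r_1\times r_1$ and $r_{k+1}\times r_{k+1}$ respectively. This confirms that both traces are well-posed and that the reordered sequence $(\tensor Z_{k+1},\ldots,\tensor Z_d,\tensor Z_1,\ldots,\tensor Z_k)$ is itself a legitimate TR decomposition whose adjacent ranks still match, which completes the argument.
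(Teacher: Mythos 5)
Your proposal is correct and follows essentially the same route as the paper: both reduce the claim to the element-wise identity and invoke the cyclic invariance of the trace applied to the blocks $\mat Z_1(i_1)\cdots\mat Z_k(i_k)$ and $\mat Z_{k+1}(i_{k+1})\cdots\mat Z_d(i_d)$. Your additional check that the block products are square (via $r_{d+1}=r_1$) is a harmless and welcome bit of rigor the paper leaves implicit.
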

It is obvious that (\ref{eq:TRD1}) can be easily rewritten as
\begin{multline}
T(i_1, i_2,\ldots, i_d) = \text{Tr}(\mat Z_2(i_2), \mat Z_3(i_3),\ldots, \mat Z_d(i_d), \mat Z_1(i_1))\\ =\cdots =  \text{Tr}(\mat Z_d(i_d), \mat Z_1(i_1),\ldots, \mat Z_{d-1}(i_{d-1})).
\end{multline}
Therefore, we have
${\overleftarrow {\tensor {T}}^k} =\Re(\tensor Z_{k+1}, \ldots, \tensor Z_d, \tensor Z_{1},\ldots, \tensor Z_k)$.

It should be noted that this property is an essential feature that distinguishes TR decomposition from the TT decomposition. For TT decomposition, the product of matrices must keep a strictly sequential order, which results in that the cores for representing the same tensor with a circular dimension shifting cannot keep invariance. Hence, it is necessary to choose an optimal dimensional permutation when applying the TT decomposition.

\section{Learning Algorithms}
\label{sec:alg}
In this section, we develop several algorithms to learn the TR model. Since the exact tensor decompositions usually require heavy computation and storage, we focus on the low-rank tensor approximation under the TR framework. The selection of the optimum TR-ranks $\vect r \in \mathbb{R}^{d}$ is a challenging model selection problem. In general, $\vect r$ can be manually given, or be optimized based on the specific objective function such as nuclear norm or maximum marginal likelihood. Since the true noise distribution is unknown in practice,  we usually prefer to a low-rank approximation of the data with a relative error that can be controlled in an arbitrary scale. Therefore, given a tensor $\tensor T$, our main objective is to seek a set of cores which can approximate $\tensor T$ with a prescribed relative error $\epsilon_{p}$, while the  TR-ranks are minimum, i.e., \begin{equation}
\begin{split}
 \min_{\tensor Z_1,\ldots,\tensor Z_d}: &{\quad \vect r}\\
\text{s. t.} : &\quad \|\tensor{T}-\Re(\tensor Z_1, \tensor Z_2, \ldots, \tensor Z_d)\|_F \leq \epsilon_{p} \|\tensor T\|_F.
\end{split}
\end{equation}

\begin{definition}
Let $\tensor T \in\mathbb{R}^{n_1\times n_2\times\cdots\times n_d}$ be a $d$th-order tensor. The \emph{$k$-unfolding} of $\tensor T$ is a matrix, denoted by $\mat T_{\langle k\rangle}$ of size $\prod_{i=1}^k n_i \times \prod_{i=k+1}^d n_{i}$, whose elements are defined by
\begin{equation}
 T_{\langle k\rangle}(\overline{i_1\cdots i_k},\overline{i_{k+1}\cdots i_d}) = T(i_1,i_2,\ldots, i_d),
\end{equation}
where the first $k$ indices enumerate the rows of $\mat T_{\langle k\rangle}$, and the last $d-k$ indices for its columns.
\end{definition}

\begin{definition}
The \emph{mode-$k$ unfolding} matrix of $\tensor T $ is denoted by $\mat T_{[k]}$ of size $n_k \times \prod_{j\neq k} n_j$ with its elements defined by
\begin{equation}
T_{[k]}(i_k, \overline{i_{k+1}\cdots i_{d}i_1 \cdots i_{k-1}}) = T(i_1,i_2, \ldots, i_d),
\end{equation}
where $k$th index enumerate the rows of $\mat T_{[k]}$, and the rest $d-1$ indices for its columns. Note that the \emph{classical mode-$k$ unfolding} matrix is denoted by $\mat T_{(k)}$ of size $n_k \times \prod_{j\neq k} n_j$ and defined by
\begin{equation}
T_{(k)}(i_k, \overline{i_{1}\cdots i_{k-1}i_{k+1} \cdots i_{d}}) = T(i_1,i_2, \ldots, i_d).
\end{equation}
The difference between these two types of mode-$k$ unfolding operations lie in the ordering of indices associated to the $d-1$ modes, which corresponds to a specific dimensional permutation performed on $\tensor T$. We use these two type of definitions for clarity and notation simplicity.
\end{definition}

\begin{definition}
Let $\tensor T = \Re(\tensor Z_1, \tensor Z_2, \ldots, \tensor Z_d)$ be a TR representation of $d$th-order tensor, where $\tensor Z_k\in\mathbb{R}^{r_k\times n_k\times r_{k+1}}, k=1,\ldots,d$ be a sequence of cores.  Since the adjacent cores  $\tensor Z_k$ and $\tensor Z_{k+1}$ have an equivalent mode size $r_{k+1}$, they can be merged into a single core by multilinear products, which is defined by $\tensor Z^{(k,k+1)}\in\mathbb{R}^{r_k\times n_k n_{k+1}\times r_{k+2}}$ whose lateral slice matrices are given by
\begin{equation}
\label{eq:mergecores}
\mat Z^{(k,k+1)}(\overline{i_k i_{k+1}}) =  \mat Z_k(i_k)\mat Z_{k+1}(i_{k+1}).
\end{equation}
 Note that $\tensor Z_k, k=1,\ldots,d$ forms a circular sequence, implying that $\tensor Z_d$ is linked to $\tensor Z_1$ as well. This merging operation can be extended straightforwardly to  multiple linked cores.

The new core obtained by merging multiple linked cores $\tensor Z_1, \ldots, \tensor Z_{k-1}$, called a \emph{subchain},
is defined and denoted by $\tensor Z^{<k}\in\mathbb{R}^{r_1\times \prod_{j=1}^{k-1} n_j \times r_k}$ whose lateral slice matrices are given by
\begin{equation}
\label{eq:firstKcores}
\mat Z^{<k}(\overline{i_1 \cdots i_{k-1}}) = \prod_{j=1}^{k-1} \mat Z_j(i_j).
\end{equation}
Similarly, the subchain tensor by merging multiple linked cores $\tensor Z_{k+1},\ldots, \tensor Z_d$ is denoted by $\tensor Z^{>k}\in\mathbb{R}^{r_{k+1}\times \prod_{j=k+1}^d n_j \times r_1}$ whose lateral slice matrices are defined as
\begin{equation}
\label{eq:lastKcores}
\mat Z^{>k}(\overline{i_{k+1} \cdots i_{d}}) = \prod_{j=k+1}^{d} \mat Z_j(i_j).
\end{equation}
The subchain tensor by merging all cores except $k$th core $\tensor Z_k$, i.e., $\tensor Z_{k+1},\ldots, \tensor Z_d, \tensor Z_1, \ldots, \tensor Z_{k-1}$, is denoted by $\tensor Z^{\neq k}\in\mathbb{R}^{r_{k+1}\times \prod_{j=1,j\neq k}^{d} n_j\times r_k}$ whose slice matrices are defined by
\begin{equation}
\label{eq:subchain3}
\mat Z^{\neq k}(\overline{i_{k+1} \cdots i_{d}i_1\ldots i_{k-1} }) = \prod_{j=k+1}^{d} \mat Z_j(i_j)\prod_{j=1}^{k-1} \mat Z_j(i_j).
\end{equation}
\end{definition}
Analogously, we can also define subchains of $\tensor Z^{\leq k}$, $\tensor Z^{\geq k}$ and $\tensor Z^{\neq (k, k+1)}$ in the same way. Note that a special subchain by merging all cores is denoted by $\tensor Z^{(1:d)}$ of size $r_1 \times \prod_{j=1}^d n_j\times r_1$ whose mode-2 fibers $\mat Z^{(1:d)}(\alpha_1, :, \alpha_1), \alpha_1=1,\ldots, r_1$ can be represented as TT representations, respectively.

\subsection{Sequential SVDs algorithm}
We propose the first algorithm for computing the TR decomposition using $d$ sequential SVDs. This algorithm will be called \emph{TR-SVD} algorithm.

\begin{theorem}
Let us assume $\tensor T$ can be represented by a TR decomposition. If  the $k$-unfolding matrix $\mat T_{\langle k\rangle}$ has $Rank(\mat T_{\langle k \rangle})=R_{k+1}$, then there exists a TR decomposition with TR-ranks $\vect r$ which satisfies that $ \exists k, r_1r_{k+1} \leq R_{k+1}$.
\begin{proof}
We can express TR decomposition in the form of $k$-unfolding matrix,
\begin{equation}
\label{eq:TRSVD1}
\begin{split}
& T_{\langle k\rangle}(\overline{i_1\cdots i_k}, \overline{i_{k+1}\cdots i_d})  = \text{Tr}\left\{\mat Z_1(i_1)\mat Z_2(i_2)\cdots \mat Z_d(i_d)\right\}\\
&\qquad = \text{Tr}\left\{ \prod_{j=1}^k \mat Z_j(i_j)  \prod_{j=k+1}^d \mat Z_j(i_j) \right\}\\
&\qquad = \left\langle \text{vec}\left(\prod_{j=1}^k \mat Z_j(i_j)\right), \text{vec}\left(\prod_{j=d}^{k+1} \mat Z^T_j(i_j) \right)   \right\rangle.
\end{split}
\end{equation}
According to the definitions in (\ref{eq:firstKcores})(\ref{eq:lastKcores}),  (\ref{eq:TRSVD1}) can be also rewritten as
\begin{equation}
\label{eq:TRSVD2}
\begin{split}
& T_{\langle k\rangle}(\overline{i_1\cdots i_k}, \overline{i_{k+1}\cdots i_d}) \\
& = \sum_{\alpha_1\alpha_{k+1}} Z^{\leq k}\left(\overline{i_1\cdots i_k}, \overline{\alpha_1\alpha_{k+1}}\right)  Z^{>k}\left(\overline{\alpha_1\alpha_{k+1}},\overline{i_{k+1}\cdots i_d}\right).
\end{split}
\end{equation}
Hence, we can obtain that $\mat T_{\langle k\rangle} = \mat Z^{\leq k}_{(2)} (\mat Z^{>k}_{[2]})^T$, where the subchain $\mat Z^{\leq k}_{(2)}$ is of size $\prod_{j=1}^k n_j\times r_1r_{k+1}$, and $\mat Z^{>k}_{[2]}$ is of size $\prod_{j=k+1}^d n_{j}\times r_1r_{k+1}$. Since the rank of $\mat T_{\langle k\rangle}$ is $R_{k+1}$,  we can obtain that $r_1r_{k+1} \leq R_{k+1}$.
\end{proof}
\end{theorem}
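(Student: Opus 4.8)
The plan is to convert the statement into a bound on the rank of a matrix product by expressing the TR model through its $k$-unfolding $\mat T_{\langle k\rangle}$. Starting from the element-wise definition in (\ref{eq:TRD1}), I would split the circular chain of slice matrices at position $k$, collecting the first $k$ factors $\prod_{j=1}^{k}\mat Z_j(i_j)$ and the last $d-k$ factors $\prod_{j=k+1}^{d}\mat Z_j(i_j)$ into two separate partial products. Because the outermost operation is a trace rather than an ordinary matrix product, I cannot simply read off a single shared mode size as one does for TT; instead I would use the identity $\text{Tr}(\mat A\mat B)=\langle\text{vec}(\mat A),\text{vec}(\mat B^T)\rangle$ to rewrite each scalar entry $T(i_1,\ldots,i_d)$ as an inner product between the two vectorized partial products.

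Next I would recognize these two partial products as the subchain tensors introduced in (\ref{eq:firstKcores}) and (\ref{eq:lastKcores}): the head group assembles into $\mat Z^{\leq k}$ and the tail group into $\mat Z^{>k}$. Matricizing each appropriately then yields the factorization $\mat T_{\langle k\rangle}=\mat Z^{\leq k}_{(2)}\bigl(\mat Z^{>k}_{[2]}\bigr)^{T}$, whose common inner dimension is exactly $r_1 r_{k+1}$, the product of the two boundary ranks of the subchains. The factor $r_1$ is present precisely because the trace couples the head index $\alpha_1$ back to the tail index $\alpha_1$, forcing a summation over \emph{both} boundary indices $\alpha_1$ and $\alpha_{k+1}$ simultaneously. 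Keeping this double bookkeeping straight, and seeing that the coupled pair $(\alpha_1,\alpha_{k+1})$ collapses into a single inner index of size $r_1 r_{k+1}$, is the one place where the ring structure differs essentially from the train, and I expect it to be the main obstacle.

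Once the factorization is in place, the rank estimate is immediate: a product of matrices with inner dimension $r_1 r_{k+1}$ has rank at most $r_1 r_{k+1}$, so $R_{k+1}=\text{Rank}(\mat T_{\langle k\rangle})\leq r_1 r_{k+1}$. To establish the existence claim of the theorem in the opposite direction, I would run the argument constructively: take a rank-revealing (thin SVD) factorization $\mat T_{\langle k\rangle}=\mat A\mat B$ with inner dimension $R_{k+1}$, split that single index into a pair $(\alpha_1,\alpha_{k+1})$ with $r_1 r_{k+1}=R_{k+1}$, and reshape $\mat A$ and $\mat B$ into subchain tensors $\mat Z^{\leq k}$ and $\mat Z^{>k}$, thereby exhibiting an explicit TR representation whose boundary ranks realize the bound $r_1 r_{k+1}\leq R_{k+1}$. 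The only delicate point in this direction is verifying that the reshaped cores genuinely recombine under the trace to reproduce $\tensor T$, which follows by reversing the vectorization identity used above.
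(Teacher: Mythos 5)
Your proposal follows essentially the same route as the paper: split the ring at mode $k$, apply $\text{Tr}(\mat A\mat B)=\langle\text{vec}(\mat A),\text{vec}(\mat B^{T})\rangle$, identify the two partial products with the subchains $\tensor Z^{\leq k}$ and $\tensor Z^{>k}$, and read off the factorization $\mat T_{\langle k\rangle}=\mat Z^{\leq k}_{(2)}\bigl(\mat Z^{>k}_{[2]}\bigr)^{T}$ with inner dimension $r_1 r_{k+1}$. You are in fact more careful than the paper on the last step: the factorization by itself only gives $R_{k+1}\leq r_1 r_{k+1}$, and your constructive argument --- factor $\mat T_{\langle k\rangle}$ with inner dimension exactly $R_{k+1}$ via a thin SVD and reshape the factors back into subchains --- is precisely what is needed to justify the existence claim $r_1 r_{k+1}\leq R_{k+1}$, a step the paper's own proof asserts without making explicit.
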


For TT-SVD algorithm, we usually need to choose a specific mode as the start point (e.g., the first mode).
According to (\ref{eq:TRSVD1})(\ref{eq:TRSVD2}), TR decomposition can be easily written as
\begin{equation}
T_{\langle 1\rangle}(i_1, \overline{i_2\cdots i_d}) = \sum_{\alpha_1,\alpha_2} Z^{\leq 1}(i_1, \overline{\alpha_1\alpha_2})Z^{>1}(\overline{\alpha_1\alpha_2}, \overline{i_2\cdots i_d}).
\end{equation}
Since the low-rank approximation of $\mat T_{\langle 1\rangle}$ can be easily obtained by the truncated SVD, which is
\begin{equation}
\mat T_{\langle 1 \rangle} = \mat U \Sigma\mat V^T +\mat E_{1},
\end{equation}
the first core $\tensor Z_1 (i.e., \tensor Z^{\leq 1}) $ of size $r_1\times n_1\times r_2$ can be obtained by the proper reshaping and permutation of $\mat U$ and the subchain $\tensor Z^{>1}$ of size $r_2\times \prod_{j=2}^d n_j\times r_1$ is obtained by the proper reshaping and permutation of $\mat \Sigma \mat V^T$, which corresponds to the rest $d-1$ dimensions of $\tensor T$.  Subsequently, we can further reshape the subchain $\tensor Z^{>1}$ as a matrix $\mat Z^{>1}\in\mathbb{R}^{r_2n_2\times \prod_{j=3}^d n_j r_1}$ which thus can be written as
\begin{equation}
Z^{>1}(\overline{\alpha_2 i_2}, \overline{i_3\cdots i_d\alpha_1}) = \sum_{\alpha_3}Z_2(\overline{\alpha_2 i_2}, \alpha_3) Z^{>2}(\alpha_3, \overline{i_3\cdots i_d\alpha_1}).
\end{equation}
By applying truncated SVD, i.e., $\mat Z^{>1} = \mat U\Sigma\mat V^T + \mat E_2$, we can obtain the second core $\tensor Z_2$ of size $(r_2\times n_2\times r_3)$ by appropriately reshaping $\mat U$ and the subchain $\tensor Z^{>2}$ by proper reshaping of $\mat \Sigma \mat V^T$. This procedure can be performed sequentially to obtain all $d$ cores $\tensor Z_k, k=1,\ldots, d$.

As proved in \cite{oseledets2011tensor}, the approximation error by using such sequential SVDs is given by
\begin{equation}
\|\tensor{T}-\Re(\tensor Z_1, \tensor Z_2, \ldots, \tensor Z_d)\|_F \leq \sqrt{\sum_{k=1}^{d-1} \|\mat E_k\|_F^2}.
\end{equation}
Hence, given a prescribed relative error $\epsilon_p$, the truncation threshold $\delta$ can be set to $\frac{\epsilon_p}{\sqrt{d-1}}\|\tensor T\|_F$. However, considering that $\|\mat E_1\|_F$ corresponds to two ranks including both $r_1$ and $r_2$, while $\mat \|\mat E_k\|_F, \forall k>1$ correspond to only one rank $r_{k+1}$. Therefore, we modify the truncation threshold as
\begin{equation}
\label{eq:TRSVDthreshold}
\delta_k =\left \{\begin{array}{c}
             \sqrt{2}\epsilon_p \|\tensor T\|_F / \sqrt{d}, \quad k=1, \\
             \epsilon_p\|\tensor T\|_F / \sqrt{d}, \quad k>1. \\
           \end{array} \right.
\end{equation}
Finally, the TR-SVD algorithm is summarized in Alg.~\ref{alg:TR-SVD}.

\renewcommand{\algorithmicrequire}{\textbf{Input:}}
\renewcommand{\algorithmicensure}{\textbf{Output:}}
\begin{algorithm}
\caption{TR-SVD}
\label{alg:TR-SVD}
\begin{algorithmic}[1]
\Require A $d$th-order tensor $\tensor T$ of size $(n_1\times\cdots\times n_d)$ and the prescribed relative error $\epsilon_p$.
\Ensure  Cores $\tensor Z_k, k=1,\ldots,d$ of TR decomposition and the TR-ranks $\mat r$.
\State Compute truncation threshold $\delta_k$ for $k=1$ and $k>1$.
\State Choose one mode as the start point (e.g., the first mode) and obtain the $1$-unfolding matrix $\mat T_{\langle 1\rangle}$.
\State Low-rank approximation by applying $\delta_1$-truncated SVD: $\mat T_{\langle 1 \rangle} = \mat U\Sigma\mat V^T + \mat E_1$.
\State Split ranks $r_1,r_2$ by
\begin{equation}
\nonumber
\begin{split}
\min_{r_1,r_2} \quad \|r_1-r_2\|, \quad
 s. t. \quad r_1r_2 = \text{rank}_{\delta_1}(\mat T_{\langle 1\rangle}).
\end{split}
\end{equation}
\State $\tensor Z_1 \gets \text{permute}(\text{reshape}(\mat U, [ n_1, r_1, r_2]), [2,1,3])$.
\State $\tensor Z^{>1} \gets \text{permute}(\text{reshape}(\mat \Sigma\mat V^T, [r_1,r_2,\prod_{j=2}^d n_j]), [2,3,1])$.
\For{ $k=2$ to $d-1$}
\State $\mat Z^{>k-1} = \text{reshape}(\tensor Z^{>k-1}, [r_k n_k, n_{k+1}\cdots n_d r_1])$.
\State Compute $\delta_k$-truncated SVD:
             \begin{equation}\nonumber\mat Z^{>k-1} = \mat U\Sigma\mat V^T + \mat E_k.\end{equation}
\State $r_{k+1} \gets \text{rank}_{\delta_k}(\mat Z^{>k-1})$.
\State $\tensor Z_k \gets \text{reshape}(\mat U, [r_k, n_k, r_{k+1}])$.
\State $\tensor Z^{>k}\gets \text{reshape}(\Sigma\mat V^T, [r_{k+1}, \prod_{j=k+1}^{d} n_j, r_1])$.
\EndFor
\end{algorithmic}
\end{algorithm}

The cores obtained by TR-SVD algorithm are left-orthogonal, which is $\mat Z^T_{k\langle 2\rangle}\mat Z_{k\langle 2\rangle}=\mat I$, for $k=2,\ldots, d-1$. It should be noted that TR-SVD is a non-recursive algorithm that does not need iterations for convergence. However, it might obtain different representations by choosing a different mode as the start point. This indicates that TR-ranks $\mat r$ is not necessary to be the global optimum in TR-SVD. Therefore, we consider to develop other algorithms that can find the optimum TR-ranks and are independent with the start point.

\subsection{ALS algorithm}
In this section, we introduce an algorithm for TR decomposition by employing alternating least squares (ALS). The ALS algorithm has been widely applied to most tensor decomposition models such as CP and Tucker decompositions~\cite{Kolda09,Holtz-TT-2012}. The main concept of ALS is optimizing one core while the other cores are fixed, and this procedure will be repeated until some convergence criterion is satisfied. Given a $d$th-order tensor $\tensor T$, our goal is to optimize the cores with a given TR-ranks $\vect r$, i.e.,
\begin{equation}
\label{eq:alsobj}
\min_{\tensor Z_1,\ldots, \tensor Z_d }\|\tensor T - \Re(\tensor Z_1, \ldots, \tensor Z_d)\|_F.
\end{equation}

\begin{theorem}
\label{Theorem:kunfolding}
Given a TR decomposition $\tensor T = \Re(\tensor Z_1, \ldots, Z_d)$, its mode-$k$ unfolding matrix can be written as
\begin{equation}\label{eq:theoremals}\mat T_{[k]} = {\mat Z_k}_{(2)} \left(\mat Z^{\neq k}_{[2]}\right)^T ,\end{equation}
where $\tensor Z^{\neq k}$ is a subchain obtained by merging $d-1$ cores, which is defined in (\ref{eq:subchain3}).
\end{theorem}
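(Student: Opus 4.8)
The plan is to collapse the $d$-fold trace in the definition of the TR model down to the trace of a product of just two matrices -- the $k$th core slice $\mat Z_k(i_k)$ and the merged subchain slice $\mat Z^{\neq k}$ -- and then to recognise the resulting double summation over the two latent indices $\alpha_k,\alpha_{k+1}$ as a single matrix product of the two unfoldings appearing in the statement.

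First I would use the cyclic property of the trace, equivalently the circular permutation invariance of Theorem~\ref{theorem:invariance}, to rotate the product in (\ref{eq:TRD1}) so that the $k$th slice appears first,
\[
T(i_1,\ldots,i_d)=\text{Tr}\left\{\mat Z_k(i_k)\prod_{j=k+1}^{d}\mat Z_j(i_j)\prod_{j=1}^{k-1}\mat Z_j(i_j)\right\}.
\]
By the definition (\ref{eq:subchain3}), the trailing product of the remaining $d-1$ slices is exactly the slice $\mat Z^{\neq k}(\overline{i_{k+1}\cdots i_{k-1}})$ of size $r_{k+1}\times r_k$, so each entry of the mode-$k$ unfolding reduces to the two-matrix trace $T_{[k]}(i_k,\overline{i_{k+1}\cdots i_{k-1}})=\text{Tr}\{\mat Z_k(i_k)\,\mat Z^{\neq k}(\overline{i_{k+1}\cdots i_{k-1}})\}$.

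Next I would expand this trace over the inner indices $\alpha_k\in\{1,\ldots,r_k\}$ and $\alpha_{k+1}\in\{1,\ldots,r_{k+1}\}$, writing the entry as $\sum_{\alpha_k,\alpha_{k+1}} Z_k(\alpha_k,i_k,\alpha_{k+1})\,Z^{\neq k}(\alpha_{k+1},\overline{i_{k+1}\cdots i_{k-1}},\alpha_k)$. It then remains to match the two factors to the unfoldings in (\ref{eq:theoremals}): the classical mode-$2$ unfolding ${\mat Z_k}_{(2)}$ has rows indexed by $i_k$ and columns by the joint index $\overline{\alpha_k\alpha_{k+1}}$, while the mode-$2$ unfolding $\mat Z^{\neq k}_{[2]}$ has rows indexed by $\overline{i_{k+1}\cdots i_{k-1}}$ and columns by the same joint index; transposing the latter puts $\overline{\alpha_k\alpha_{k+1}}$ on its rows, and summing over this shared index is precisely the $(i_k,\overline{i_{k+1}\cdots i_{k-1}})$ entry of ${\mat Z_k}_{(2)}(\mat Z^{\neq k}_{[2]})^T$, which establishes (\ref{eq:theoremals}).

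The step I expect to be the crux is verifying that the two different unfolding conventions induce the \emph{same} linear ordering of the joint latent index $\overline{\alpha_k\alpha_{k+1}}$, so that the double sum genuinely collapses to a matrix product rather than to a reshaped or permuted contraction. The classical unfolding ${\mat Z_k}_{(2)}$ lists the non-mode-$2$ indices of $\tensor Z_k$ as $(\alpha_k,\alpha_{k+1})$, whereas for $\tensor Z^{\neq k}$ the unfolding $\mat Z^{\neq k}_{[2]}$ must list its first and third modes -- which carry $\alpha_{k+1}$ and $\alpha_k$ -- in the wrapped order prescribed by the definition of $\mat T_{[k]}$, and one has to check that this agrees with $(\alpha_k,\alpha_{k+1})$. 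This compatibility is exactly why the paper introduces the two distinct mode-$k$ unfoldings $\mat T_{(k)}$ and $\mat T_{[k]}$, and confirming it for the cyclic wrap-around of the column multi-index is where essentially all the bookkeeping resides.
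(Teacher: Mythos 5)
Your proposal is correct and follows essentially the same route as the paper: isolate the $k$th core, absorb the remaining $d-1$ slices into the subchain $\tensor Z^{\neq k}$ via (\ref{eq:subchain3}), and read the resulting contraction over $(\alpha_k,\alpha_{k+1})$ as the matrix product ${\mat Z_k}_{(2)}(\mat Z^{\neq k}_{[2]})^T$; the paper does this directly in index form starting from (\ref{eq:TRD2}) rather than invoking cyclicity of the trace, but that is a cosmetic difference. Your closing remark about verifying that both unfoldings order the joint column index as $\overline{\alpha_k\alpha_{k+1}}$ is exactly the bookkeeping the paper compresses into ``by applying different mode-$k$ unfolding operations, we can easily justify the formula,'' so you have if anything been more explicit about the one step the paper leaves to the reader.
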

\begin{proof}
According to the TR definition in (\ref{eq:TRD2}), we have
\begin{equation}
\resizebox{1\columnwidth}{!}{$
\begin{split}
&T(i_1,i_2,\ldots,i_d) \\
& = \sum_{\alpha_1,\ldots,\alpha_d}Z_1(\alpha_1,i_1,\alpha_2)Z_2(\alpha_2,i_2,\alpha_3) \cdots Z_d(\alpha_{d},i_d,\alpha_1)\\
& = \sum_{\alpha_{k},\alpha_{k+1}} \Big\{ Z_k(\alpha_k,i_k, \alpha_{k+1}) \sum_{\substack{\alpha_1,\ldots, \alpha_{k-1} \\\alpha_{k+2}\ldots,\alpha_d}} Z_{k+1}(\alpha_{k+1},i_{k+1},\alpha_{k+2})    \\
& \qquad  \cdots Z_{d}(\alpha_d,i_d,\alpha_1) Z_1(\alpha_1,i_1,\alpha_2)  \cdots Z_{k-1}(\alpha_{k-1},i_{k-1},\alpha_{k})\Big\} \\
& =\sum_{\alpha_{k},\alpha_{k+1}} \Big\{Z_k(\alpha_k,i_k, \alpha_{k+1}) Z^{\neq k} (\alpha_{k+1}, \overline{i_{k+1}\cdots i_d i_1 \cdots i_{k-1}},\\
& \qquad \alpha_{k})\Big\}.
\end{split}
$}
\end{equation}
Hence, the mode-$k$ unfolding matrix of $\tensor T$ can be expressed by
\begin{equation}
\begin{split}
& T_{[k]}(i_k, \overline{i_{k+1} \cdots i_d i_1 \cdots i_{k-1}}) = \sum_{\alpha_k\alpha_{k+1}}\Big\{ Z_k(i_k, \overline{\alpha_k \alpha_{k+1}}) \\
&\qquad Z^{\neq k}(\overline{\alpha_k\alpha_{k+1}},\overline{i_{k+1}\cdots i_d i_1\cdots i_{k-1}})\Big\}.
\end{split}
\end{equation}
This indicates a product of two matrices. By applying different mode-$k$ unfolding operations, we can easily justify the formula in (\ref{eq:theoremals}).
\end{proof}

Based on Theorem \ref{Theorem:kunfolding}, the objective function in (\ref{eq:alsobj}) can be optimized by solving $d$ subproblems alternatively. More specifically,  having fixed all but one core, the problem reduces to a linear least squares problem, which is
\begin{equation}
\min_{\mat Z_{k(2)}} \left\| \mat T_{[k]} - \mat Z_{k(2)} \left(\mat Z^{\neq k}_{[2]}\right)^T \right\|_F, \quad k=1,\ldots,d.
\end{equation}
Therefore, TR composition can be performed by ALS optimizations, which is called TR-ALS algorithm. The detailed procedure is shown in Alg.~\ref{alg:TR-ALS}.

\begin{algorithm}
\caption{TR-ALS}
\label{alg:TR-ALS}
\begin{algorithmic}[1]
\Require A $d$th-order tensor $\tensor T$ of size $(n_1\times\cdots\times n_d)$ and the predefined TR-ranks $\mat r$.
\Ensure  Cores $\tensor Z_k, k=1,\ldots,d$ of TR decomposition.
\State Initialize $\tensor Z_k\in\mathbb{R}^{r_{k}\times n_k\times r_{k+1}}$ for $k=1,\ldots, d$ as random tensors from Gaussian distribution.
\Repeat
\For{ $k=1$ to $d$}
\State Compute the subchain $\tensor Z^{\neq k}$ by using (\ref{eq:subchain3}).
\State Obtain $\mat Z^{\neq k}_{[2]}$ of size $\prod_{j=1}^d n_j / n_k \times \, r_k r_{k+1}$.
\State $\mat Z_{k(2)} \gets \arg\min \|{\mat T_{[k]} - \mat Z_{k(2)} (\mat Z_{[2]}^{\neq k}})^T\|_F$.
\State Normalize columns of $\mat Z_{k(2)}$, if $k\neq d$.
\State $\tensor Z_{k} \gets \text{permute}( \text{reshape}(\mat Z_{k(2)}, [ n_k,r_k,r_{k+1}]), [2,1,3])$.
\State Relative error $\epsilon \gets \|\tensor T - \Re(\tensor Z_1,\ldots, \tensor Z_d) \|/\|\tensor T\|_F$
\EndFor
\Until{Relative changes of $\epsilon$ is smaller than a specific threshold (e.g. $10^{-6})$, or maximum number of iterations is reached. }
\end{algorithmic}
\end{algorithm}

The cores can be initialized randomly with specified TR-ranks. The iterations repeat until some combination of stopping conditions is satisfied. Possible stopping conditions include the following: little or no improvement in the objective function; the value of objective function being smaller than a specific threshold; a predefined maximum number of iterations is reached. The normalization is performed on all cores except the last one that absorbs the weights. It should be noted that the cores are not necessary to be orthogonal in TR-ALS.

\subsection{ALS with adaptive ranks}
One important limitation of TR-ALS algorithm is that TR-ranks must be specified and fixed, which may make the algorithm difficult to obtain a desired accuracy. Although we can try different TR-ranks and select the best one, the computation cost will dramatically increase due to the large number of possibilities when high dimensional tensors are considered. Therefore, we attemp to develop an ALS algorithm for TR decomposition with adaptive ranks, which is simply called ALSAR algorithm.

The ALSAR algorithm  is initialized with equivalent TR-ranks, i.e., $r_1=r_2=\cdots = r_d=1$. The core tensors $\tensor Z_k, k=1,\ldots,d $ are initialized by random tensors which are of size $1\times n_k\times 1$. For optimization of each core tensor $\tensor Z_k$, it was firstly updated according to ALS scheme, yielding the updated approximation error $\epsilon_{old}$. Then, we attempt to increase the rank by $r_{k+1} \leftarrow r_{k+1}+1$, which implies that $\tensor Z_k, \tensor Z_{k+1}$ must be updated with the increased sizes. More specifically, based on the modified $\tensor Z_{k+1}$ by adding more random entries,  $\tensor Z_k$ is updated again yielding the new approximation error $\epsilon_{new}$. If the improvement of approximation error by increasing the rank $r_{k+1}$ satisfies a specific criteria, then the increased rank is accepted otherwise it is rejected.  The acceptance criteria can be simply expressed by
\begin{equation}
|{\epsilon_{old}-\epsilon_{new}}| > \tau {|\epsilon_{old}-\epsilon_{p}|},
\end{equation}
where $\epsilon_{p}$ denotes the desired approximation error, and $\tau$ denotes the threshold for accepting the increased rank. The reasonable choices for $\tau$ vary between $10^{-2}/d$ and $10^{-3}/d$. This procedure will be repeated untill the desired approximation error is reached. The detailed algorithm is summarized in Alg. \ref{alg:TR-ALSAR}.

This algorithm is intuitive and heuristic. During the optimization procedure, the corresponding rank is tentatively increased  followed by a decision making based on the acceptance criterion. Hence, it can achieve an arbitrary approximation error $\epsilon_p$ by rank adaptation. However, since the step of rank increasing is only one at each iteration, it might need many iterations to achieve the desired accuracy if TR-ranks are relatively large.

\begin{algorithm}
\caption{TR-ALSAR}
\label{alg:TR-ALSAR}
\begin{algorithmic}[1]
\Require A $d$-dimensional tensor $\tensor T$ of size $(n_1\times\cdots\times n_d)$ and the prescribed relative error $\epsilon_{p}$.
\Ensure  Cores $\tensor Z_k$ and TR-ranks $r_k, k=1,\ldots, d$.
\State Initialize $r_k =1$ for $k=1,\ldots, d$.
\State Initialize $\tensor Z_k\in\mathbb{R}^{r_{k}\times n_k\times r_{k+1}}$ for $k=1,\ldots, d$.
\Repeat
\For{ $k=1$ to $d$}
\State $\mat Z_{k(2)} \gets \arg\min \|{\mat T_{[k]} - \mat Z_{k(2)} (\mat Z_{[2]}^{\neq k}})^T\|_F$.
\State Evaluate relative error $\epsilon_{old}$.
\State $r_{k+1} \leftarrow r_{k+1}+1$.
\State Increase the size of $\tensor Z_{k+1}$ by random samples.
\State Repeat the steps 5-7 and evaluate the error $\epsilon_{new}$.
\State \parbox[t]{\dimexpr\linewidth-\algorithmicindent-0.1in}{Determine $\mat Z_{k(2)}$ and $r_{k+1}$ according to the specific    acceptance criterion. \strut}
\State Normalize columns of $\mat Z_{k(2)}$ , if $k\neq d$.
\State $\tensor Z_k \gets \text{permute}( \text{reshape}(\mat Z_{k(2)}, [ n_k,r_k,r_{k+1}]), [2,1,3])$.
\EndFor
\Until{The desired approximation accuracy is achieved, i.e., $\epsilon \leq \epsilon_p$.}
\end{algorithmic}
\end{algorithm}

\subsection{Block-wise ALS algorithm}
In this section, we propose a computationally efficient block-wise ALS (BALS) algorithm by utilizing truncated SVD, which facilitates the self-adaptation of ranks. The main idea is to perform the blockwise optimization followed by the separation of block into individual cores. To achieve this, we consider to merge two linked  cores, e.g., $\tensor Z_k, \tensor Z_{k+1}$, into a block (or subchain) $\tensor Z^{(k,k+1)}\in \mathbb{R}^{r_k\times n_k n_{k+1}\times r_{k+2}}$ (see definition in (\ref{eq:mergecores})). Thus, the subchain $\tensor Z^{(k,k+1)}$ can be optimized while leaving all cores except $\tensor Z_k, \tensor Z_{k+1}$ fixed. Subsequently, the subchain $\tensor Z^{(k,k+1)}$ can be reshaped into $\tilde{\mat Z}^{(k,k+1)}\in\mathbb{R}^{r_kn_k\times n_{k+1}r_{k+2}}$ and separated into a left-orthonormal core $\tensor Z_k$ and $\tensor Z_{k+1}$ by a truncated SVD,
\begin{equation}
\label{eq:BALS1}
\tilde{\mat Z}^{(k,k+1)} = \mat U \mat \Sigma \mat V^T = \mat Z_{k\langle 2\rangle }\mat Z_{k+1\langle 1\rangle},
\end{equation}
where $\mat Z_{k\langle 2\rangle }\in\mathbb{R}^{r_kn_k\times r_{k+1}} $ is the $2$-unfolding matrix of core $\tensor Z_k$, which can be set to $\mat U$, while $\mat Z_{k+1\langle 1\rangle}\in \mathbb{R}^{r_{k+1}\times n_{k+1}r_{k+2}}$ is the $1$-unfolding matrix of core $\tensor Z_{k+1}$, which can be set to $\mat \Sigma\mat V^T $. This procedure thus move on to optimize the next block cores $\tensor Z^{(k+1,k+2)}, \ldots, \tensor Z^{(d-1,d)},\tensor Z^{(d,1)}$ successively in the similar way. Note that since TR model is circular, the $d$th core  can  be also merged with the first core yielding the block core $\tensor Z^{(d,1)}$.

The key advantage of BALS algorithm is the rank adaptation which can be achieved simply by separating the block core into two cores via truncated SVD, as shown in (\ref{eq:BALS1}). The truncated rank $r_{k+1}$ can be chosen such that the approximation error is below a certain threshold. One possible choice is to use the same threshold as in TR-SVD algorithm, i.e., $\delta_k$ described in (\ref{eq:TRSVDthreshold}). However, the empirical experience show that this threshold  often leads to overfitting and the truncated rank is higher than the optimum rank. This is because that the updated block $\tensor Z^{(k,k+1)}$ during ALS iterations is not a closed form solution and many iterations are necessary for convergence. To relieve this problem, we choose the truncation threshold based on both the current and the desired approximation errors, which is
\begin{equation}
\delta = \max \left\{\epsilon\|\tensor{T}\|_F/\sqrt{d},\, \epsilon_{p}\|\tensor{T}\|_F/ \sqrt{d}\right\}.
\end{equation}
The details of BALS algorithm are described in Alg. \ref{alg:TR-BALS}.

\begin{algorithm}[htbp]
\caption{TR-BALS}
\label{alg:TR-BALS}
\begin{algorithmic}[1]
\Require A $d$-dimensional tensor $\tensor T$ of size $(n_1\times\cdots\times n_d)$ and the prescribed relative error $\epsilon_{p}$.
\Ensure  Cores $\tensor Z_k$ and TR-ranks $r_k$, $k=1,\ldots,d$.
\State Initialize $r_k =1$ for $k=1,\ldots, d$.
\State Initialize $\tensor Z_k\in\mathbb{R}^{r_{k}\times n_k\times r_{k+1}}$ for $k=1,\ldots, d$.
\Repeat $\quad k\in\text{circular}\{1,2,\ldots, d\}$;
\State Compute the subchain $\tensor Z^{\neq (k,k+1)}$ by using (\ref{eq:subchain3}).
\State \parbox[t]{\dimexpr\linewidth-\algorithmicindent-0.1in} {Obtain the mode-2 unfolding matrix $\mat Z^{\neq (k,k+1)}_{[2]}$ of size $\prod_{j=1}^d n_j/(n_k n_{k+1})\times r_k r_{k+2}$. \strut}
\State $\mat Z^{(k,k+1)}_{(2)} \gets \arg\min \left\|{\mat T_{[k]} - \mat Z^{(k,k+1)}_{(2)} \left(\mat Z_{[2]}^{\neq (k, k+1)}\right)^T}\right\|_F$.
\State Tensorization of mode-2 unfolding matrix
\begin{equation}\nonumber \tensor Z^{(k,k+1)} \gets \text{folding}(\mat Z^{(k,k+1)}_{(2)}).\end{equation}
\State Reshape the block core by
\begin{equation}\nonumber \tilde{\mat Z}^{(k,k+1)} \gets \text{reshape}(\tensor Z^{(k,k+1)}, [r_k n_k\times n_{k+1}r_{k+2} ]).   \end{equation}
\State \parbox[t]{\dimexpr\linewidth-\algorithmicindent-0.1in} {Low-rank approximation by $\delta$-truncated SVD
         $\tilde{\mat Z}^{(k,k+1)} = \mat U \mat \Sigma \mat V^T$. \strut}
\State $\tensor Z_k \gets \text{reshape}(\mat U, [r_k,n_k,r_{k+1}])$.
\State $\tensor Z_{k+1} \gets \text{reshape}(\Sigma \mat V^T, [r_{k+1},n_{k+1},r_{k+2}])$.
\State $r_{k+1} \gets \text{rank}_{\delta} (\tilde{\mat Z}^{(k,k+1)})$.
\State $k\gets k+1$.
\Until{The desired approximation accuracy is achieved, i.e., $\epsilon \leq \epsilon_p$. }
\end{algorithmic}
\end{algorithm}

\subsection{Discussions on the proposed algorithms}
We briefly summarize and discuss the  proposed  algorithms as follows. TR-SVD algorithm is generally efficient in computation due to its non-recursion property and it can approximate an arbitrary tensor as close as possible. However, its obtained TR-ranks $[r_1, \ldots, r_d]$ always follow a specific pattern, i.e., smaller ranks on the both sides and larger ranks in the middle, which might not suitable to the observed data.  The other three algorithms are based on ALS framework, resulting in that the optimum TR-ranks only depend on the observed data while the recursive procedure leads to the slow convergence.   TR-ALS is stable but requires TR-ranks to be fixed and manually given.  TR-ALSAL is able to adapt TR-ranks during optimization, but requires many iterations. TR-BALS enables us to find the optimum TR-ranks efficiently without dramatically increasing the computational cost.

\section{Properties of TR representation}
\label{sec:property}
In this section, we discuss some interesting properties of TR representation. By assuming that tensor data have been already represented as TR decompositions, i.e., a sequence of third-order cores, we justify and demonstrate that the basic operations on tensors, such as \emph{addition}, \emph{multilinear product}, \emph{Hadamard product}, \emph{inner product} and \emph{Frobenius norm}, can be performed efficiently by the appropriate operations on each individual cores. These properties are crucial and essentially important for processing large-scale or large-dimensional tensors, due to the ability of converting a large problem w.r.t. the original tensor  into many small problems w.r.t. individual cores.

\begin{theorem}
Let $\tensor T_1$ and $\tensor T_2$ be $d$th-order tensors of size $n_1\times \cdots\times n_d$. If the TR decompositions of these two tensors are $\tensor T_1 = \Re(\tensor Z_1,\ldots,\tensor Z_d)$ where $\tensor Z_k\in\mathbb{R}^{r_k\times n_k\times r_{k+1}}$ and $\tensor T_2 = \Re(\tensor Y_1,\ldots,\tensor Y_d)$, where $\tensor Y_k\in\mathbb{R}^{s_k\times n_k\times s_{k+1}}$, then the addition of these two tensors, $\tensor T_3= \tensor T_1 + \tensor T_2$, can be also represented in the TR format given by $\tensor T_3 = \Re(\tensor X_1, \ldots, \tensor X_d)$, where $\tensor X_k\in\mathbb{R}^{q_k\times n_k\times q_{k+1}}$ and $q_k = r_k+s_k$. Each core $\tensor X_k$ can be computed by
\begin{equation}
\label{eq:TRsum1}
\mat X_k(i_k) = \left(
                  \begin{array}{cc}
                    \mat Z_k(i_k) & 0 \\
                    0 & \mat Y_k(i_k) \\
                  \end{array}
                \right),
                  \begin{array}{c}
                     i_k=1,\ldots,n_k,\\
                    k=1,\ldots, d.\\
                  \end{array}
\end{equation}
\end{theorem}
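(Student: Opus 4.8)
The plan is to argue directly from the element-wise trace definition in~(\ref{eq:TRD1}), exploiting the block-diagonal structure that the slice matrices $\mat X_k(i_k)$ carry by construction. First I would record the dimension bookkeeping: since $\mat Z_k(i_k)$ is $r_k\times r_{k+1}$ and $\mat Y_k(i_k)$ is $s_k\times s_{k+1}$, the block-diagonal matrix $\mat X_k(i_k)$ is of size $(r_k+s_k)\times(r_{k+1}+s_{k+1})=q_k\times q_{k+1}$, matching the claimed core sizes. Because $r_{d+1}=r_1$ and $s_{d+1}=s_1$, we also have $q_{d+1}=q_1$, so the circular product $\prod_{k=1}^d \mat X_k(i_k)$ is a square matrix and its trace is well defined.

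The key step is the elementary observation that block-diagonal structure is preserved under matrix multiplication whenever the block partitions are compatible. Here the column partition $(r_{k+1},s_{k+1})$ of $\mat X_k(i_k)$ coincides with the row partition $(r_{k+1},s_{k+1})$ of $\mat X_{k+1}(i_{k+1})$ for every $k$, including the wrap-around from $\tensor Z_d$ to $\tensor Z_1$. Hence, by a straightforward induction on the number of factors, the off-diagonal blocks stay zero at every stage, and
\[
\prod_{k=1}^d \mat X_k(i_k) = \begin{pmatrix} \prod_{k=1}^d \mat Z_k(i_k) & 0 \\ 0 & \prod_{k=1}^d \mat Y_k(i_k) \end{pmatrix}.
\]

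Finally I would apply the trace. Since the trace of a block-diagonal matrix is the sum of the traces of its diagonal blocks, combining the display above with~(\ref{eq:TRD1}) yields
\[
\text{Tr}\left\{\prod_{k=1}^d \mat X_k(i_k)\right\} = \text{Tr}\left\{\prod_{k=1}^d \mat Z_k(i_k)\right\} + \text{Tr}\left\{\prod_{k=1}^d \mat Y_k(i_k)\right\} = T_1(i_1,\ldots,i_d)+T_2(i_1,\ldots,i_d),
\]
which equals $T_3(i_1,\ldots,i_d)$ for every index tuple, so $\tensor T_3=\Re(\tensor X_1,\ldots,\tensor X_d)$. There is no genuine obstacle here; the only point needing care is the dimension compatibility of the blocks around the ring, in particular the wrap-around identity $q_{d+1}=q_1$ that makes the trace legitimate and lets the two independent matrix products close up simultaneously.
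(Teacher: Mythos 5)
Your proposal is correct and follows essentially the same route as the paper: form the circular product of the block-diagonal slices, observe it is block-diagonal with blocks $\prod_k \mat Z_k(i_k)$ and $\prod_k \mat Y_k(i_k)$, and split the trace into the sum of the two traces. You merely make explicit the dimension bookkeeping (including the wrap-around $q_{d+1}=q_1$) that the paper leaves implicit.
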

\begin{proof}
According to the definition of  TR decomposition, and the cores shown in (\ref{eq:TRsum1}), the $(i_1,\ldots,i_d)$th element of tensor $\tensor T_3$ can be written as
\begin{equation}
\begin{split}
   T_3(i_1,\ldots,i_d)       =&\text{Tr}(\mat X_1(i_1)\ldots \mat X_d(i_d)) \\
               = &\text{Tr}\left(
                  \begin{array}{cc}
                     \prod_{k=1}^d \mat Z_k(i_k) & 0 \\
                    0 &  \prod_{k=1}^d \mat Y_k(i_k) \\
                  \end{array}
                \right)\\
                =& \text{Tr}\left(\prod_{k=1}^d \mat Z_k(i_k)\right) + \text{Tr}\left(\prod_{k=1}^d \mat Y_k(i_k)\right)\\
                =& T_1(i_1,\ldots,i_d) + T_2(i_1,\ldots,i_d).
\end{split}
\end{equation}
Hence, the \emph{addition} of tensors in the TR format can be performed by  merging of their cores.
\end{proof}
Note that the sizes of new cores are increased and not optimal in general. This problem can be solved by the rounding procedure \cite{oseledets2011tensor}.

\begin{theorem}
\label{theorem:TRtimesvectors}
Let $\tensor T\in\mathbb{R}^{n_1\times\cdots\times n_d}$ be a $d$th-order tensor whose TR representation is $\tensor T = \Re(\tensor Z_1,\ldots,\tensor Z_d)$ and $\mat u_k\in\mathbb{R}^{n_k}, k=1,\ldots,d$ be a set of vectors, then the multilinear products, denoted by $c=\tensor T \times_1 \mat u_1^T\times_2\cdots \times_d \mat u_d^T$, can be computed by multilinear product on each cores, which is
\begin{equation}
\label{eq:TRmultiproduct}
\begin{split}
c=\Re(\mat X_1, \ldots, \mat X_d) \; \text{where} \; \mat X_k = \sum_{i_k=1}^{n_k}\mat Z_k(i_k) u_k(i_k).
\end{split}
\end{equation}
\end{theorem}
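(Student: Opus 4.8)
The plan is to reduce everything to the scalar element-wise definition of the multilinear product and then push the contraction vectors into the cores by exploiting the linearity of the trace. Since $c = \tensor T \times_1 \mat u_1^T \times_2 \cdots \times_d \mat u_d^T$ contracts every mode of $\tensor T$, the output is a genuine scalar, and I would first write it explicitly as
\begin{equation}
\nonumber
c = \sum_{i_1,\ldots,i_d} T(i_1,\ldots,i_d)\, u_1(i_1) \cdots u_d(i_d).
\end{equation}
Substituting the TR element-wise formula (\ref{eq:TRD1}), namely $T(i_1,\ldots,i_d) = \text{Tr}\{\prod_{k=1}^d \mat Z_k(i_k)\}$, expresses $c$ as a sum of traces, each weighted by the product of scalars $u_1(i_1)\cdots u_d(i_d)$.

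The key step is to move the entire summation inside the trace. Because the trace is linear and the scalar weights commute with the matrices, I would absorb each factor $u_k(i_k)$ into the corresponding lateral slice, writing the summand as $\text{Tr}\{\prod_{k=1}^d (\mat Z_k(i_k) u_k(i_k))\}$, and then interchange the summation with the trace to obtain
\begin{equation}
\nonumber
c = \text{Tr}\left\{ \sum_{i_1,\ldots,i_d} \prod_{k=1}^d \mat Z_k(i_k)\, u_k(i_k) \right\}.
\end{equation}
The essential observation is that the summand factorizes: the $k$th factor depends only on the index $i_k$, so the multi-index sum over $(i_1,\ldots,i_d)$ splits into a product of $d$ independent single-index sums. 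This distributivity of the sum over the matrix product gives $\sum_{i_1,\ldots,i_d}\prod_{k=1}^d \mat Z_k(i_k) u_k(i_k) = \prod_{k=1}^d \big(\sum_{i_k=1}^{n_k} \mat Z_k(i_k) u_k(i_k)\big) = \prod_{k=1}^d \mat X_k$, with $\mat X_k$ exactly the $r_k \times r_{k+1}$ matrix defined in (\ref{eq:TRmultiproduct}).

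Finally I would conclude that $c = \text{Tr}\{\prod_{k=1}^d \mat X_k\}$, and since each $\mat X_k$ is an $r_k \times r_{k+1}$ matrix, this trace of a circular product of matrices is precisely the degenerate TR expression $\Re(\mat X_1,\ldots,\mat X_d)$ (the case in which the mode-2 dimensions collapse, so the cores are matrices and the output is a scalar). The calculation is short, and the only step requiring care is justifying the factorization of the nested sum into a product of single-mode sums; I expect this to be the main conceptual point rather than a genuine obstacle, since it rests only on the fact that distinct matrix factors carry distinct, independently summed indices, together with the associativity of matrix multiplication. A minor consistency check worth stating is that $\mat X_k \in \mathbb{R}^{r_k \times r_{k+1}}$ with $r_{d+1}=r_1$, so the product $\prod_{k=1}^d \mat X_k$ is square and its trace is well defined.
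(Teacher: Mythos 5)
Your proposal is correct and follows essentially the same route as the paper's proof: expand the multilinear product element-wise, substitute the trace formula for $T(i_1,\ldots,i_d)$, interchange the summation with the trace, and factor the multi-index sum into the product $\prod_{k=1}^d \mat X_k$. Your version merely spells out the factorization step and the dimension check more explicitly than the paper does.
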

\begin{proof}
The \emph{multilinear product} between a tensor and vectors can be expressed by
\begin{equation}
\label{eq:TRmultiproductproof}
\begin{split}
c=&\tensor T \times_1 \mat u_1^T\times_2\cdots \times_d \mat u_d^T\\
=& \sum_{i_1,\ldots,i_d} T(i_1,\ldots, i_d) u_1(i_1)\cdots u_d(i_d)\\
=& \sum_{i_1,\ldots,i_d} \text{Tr}\left(\prod_{k=1}^d \mat Z_k(i_k)\right) u_1(i_1)\cdots u_d(i_d)\\
=& \text{Tr}\left( \prod_{k=1}^d \left(\sum_{i_k=1}^{n_k} \mat Z_k(i_k)u_k(i_k)  \right)       \right).
\end{split}
\end{equation}
Thus, it can be written as a TR decomposition shown in (\ref{eq:TRmultiproduct}) where each core $\mat X_k \in\mathbb{R}^{r_k\times r_{k+1}}$ becomes a matrix. The computational complexity is equal to $\mathcal{O}(dnr^2)$.
\end{proof}
From (\ref{eq:TRmultiproductproof}), we can see that the multilinear product between $\tensor T$ and $\mat u_k, k=1,\ldots,d$ can be also expressed as an inner product of $\tensor T$ and the rank-1 tensor, i.e.,
\begin{equation}
\tensor T \times_1 \mat u_1^T\times_2\cdots \times_d \mat u_d^T = \langle \tensor T, \mat u_1 \circ \cdots\circ \mat u_d \rangle.
\end{equation}
It should be noted that the computational complexity in the original tensor form is $\mathcal{O}(dn^d)$, while it reduces to $\mathcal{O}(dnr^2 + dr^3)$ that is linear to tensor order $d$ by using TR representation.

\begin{theorem}
Let $\tensor T_1$ and $\tensor T_2$ be $d$th-order tensors of size $n_1\times \cdots\times n_d$. If the TR decompositions of these two tensors are $\tensor T_1 = \Re(\tensor Z_1,\ldots,\tensor Z_d)$ where $\tensor Z_k\in\mathbb{R}^{r_k\times n_k\times r_{k+1}}$ and $\tensor T_2 = \Re(\tensor Y_1,\ldots,\tensor Y_d)$, where $\tensor Y_k\in\mathbb{R}^{s_k\times n_k\times s_{k+1}}$, then the Hadamard product of these two tensors, $\tensor T_3= \tensor T_1 \circledast \tensor T_2$, can be also represented in the TR format given by $\tensor T_3 = \Re(\tensor X_1, \ldots, \tensor X_d)$, where $\tensor X_k\in\mathbb{R}^{q_k\times n_k\times q_{k+1}}$ and $q_k = r_k*s_k$. Each core $\tensor X_k$ can be computed by
\begin{equation}
\label{eq:TRhadama}
\mat X_k(i_k) = \mat Z_k(i_k) \otimes \mat Y_k(i_k), \quad k=1,\ldots,d.
\end{equation}
\end{theorem}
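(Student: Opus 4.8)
The plan is to verify the claimed factorization element-wise, since the Hadamard product is defined entry-by-entry as $T_3(i_1,\ldots,i_d) = T_1(i_1,\ldots,i_d)\,T_2(i_1,\ldots,i_d)$. First I would invoke the TR definition (\ref{eq:TRD1}) for each factor, so that the target scalar is a \emph{product of two traces},
\[
T_3(i_1,\ldots,i_d) = \text{Tr}\left(\prod_{k=1}^d \mat Z_k(i_k)\right)\text{Tr}\left(\prod_{k=1}^d \mat Y_k(i_k)\right),
\]
and the whole task reduces to rewriting this product of traces as a single trace over the proposed cores $\mat X_k(i_k)$.

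The argument rests on two elementary properties of the Kronecker product. The first is the trace identity $\text{Tr}(\mat A)\,\text{Tr}(\mat B) = \text{Tr}(\mat A\otimes\mat B)$, which I would apply to the two traces above to obtain $\text{Tr}\!\left[\left(\prod_k \mat Z_k(i_k)\right)\otimes\left(\prod_k \mat Y_k(i_k)\right)\right]$. The second is the mixed-product property $(\mat A\otimes\mat B)(\mat C\otimes\mat D) = (\mat A\mat C)\otimes(\mat B\mat D)$, which I would apply repeatedly (by a short induction on the number of factors) to push the Kronecker product \emph{inside} the matrix product:
\[
\left(\prod_{k=1}^d \mat Z_k(i_k)\right)\otimes\left(\prod_{k=1}^d \mat Y_k(i_k)\right) = \prod_{k=1}^d \big(\mat Z_k(i_k)\otimes\mat Y_k(i_k)\big) = \prod_{k=1}^d \mat X_k(i_k),
\]
where the last equality is just the definition (\ref{eq:TRhadama}) of $\mat X_k(i_k)$. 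Substituting back gives $T_3(i_1,\ldots,i_d) = \text{Tr}\!\left(\prod_{k=1}^d \mat X_k(i_k)\right)$, which is exactly the TR representation $\tensor T_3 = \Re(\tensor X_1,\ldots,\tensor X_d)$. The dimensions are automatically consistent: since $\mat Z_k(i_k)$ is $r_k\times r_{k+1}$ and $\mat Y_k(i_k)$ is $s_k\times s_{k+1}$, the slice $\mat X_k(i_k)$ has size $(r_ks_k)\times(r_{k+1}s_{k+1})$, so $q_k = r_k s_k$ as claimed.

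The one step requiring care is the inductive use of the mixed-product property: at each merge one must check that the inner mode sizes match so that both $\mat Z_j(i_j)\mat Z_{j+1}(i_{j+1})$ and $\mat Y_j(i_j)\mat Y_{j+1}(i_{j+1})$ are well-defined, and that the Kronecker factorization telescopes correctly all the way around the ring, where the closure conditions $r_{d+1}=r_1$ and $s_{d+1}=s_1$ guarantee that the final product is the square matrix whose trace is taken. This bookkeeping is routine, but it is where the entire content of the statement resides; once it is in place the theorem follows immediately.
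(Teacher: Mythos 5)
Your proposal is correct and follows essentially the same route as the paper's own proof: expressing $T_3(i_1,\ldots,i_d)$ as a product of two traces, converting it to a single trace via $\text{Tr}(\mat A)\,\text{Tr}(\mat B)=\text{Tr}(\mat A\otimes\mat B)$, and then distributing the Kronecker product over the matrix products with the mixed-product property. You simply make explicit the two Kronecker identities and the dimension bookkeeping that the paper leaves implicit.
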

\begin{proof}
Each element in tensor $\tensor T_3$ can be written as
\begin{equation}
\begin{split}
T_3(i_1,\ldots,i_d) =& T_1(i_1,\ldots,i_d) T_2(i_1,\ldots,i_d)\\
=&\text{Tr}\left(\prod_{k=1}^d \mat Z_k(i_k)\right)\text{Tr}\left(\prod_{k=1}^d \mat Y_k(i_k)\right)\\
=& \text{Tr}\left\{\left(\prod_{k=1}^d \mat Z_k(i_k)\right)\otimes \left(\prod_{k=1}^d \mat Y_k(i_k)\right)\right\}\\
=& \text{Tr} \left\{\prod_{k=1}^d \Big( \mat Z_k(i_k) \otimes \mat Y_k(i_k) \Big) \right\}.
\end{split}
\end{equation}
Hence, $\tensor T_3$ can be also represented as TR format with its cores computed by (\ref{eq:TRhadama}), which costs $\mathcal{O}(dnq^2)$.
\end{proof}

Furthermore, one can compute the \emph{inner product} of two tensors in TR representations. For two tensors $\tensor T_1$ and $\tensor T_2$, it is defined as
\begin{equation}
\begin{split}
\langle \tensor T_1, \tensor T_2\rangle  =  \sum_{i_1,\ldots,i_d} T_3(i_1,\ldots,i_d),
\end{split}
\end{equation}
where $\tensor T_3 =  \tensor T_1 \circledast \tensor T_2$. Thus, the inner product can be computed by applying Hadamard product and then computing the multilinear product between $\tensor T_3$ and vectors of all ones, i.e., $\mat u_k=\mat 1, k=1,\ldots,d$.  In contrast to $\mathcal{O}(n^d)$ in the original tensor form, the computational complexity is equal to $\mathcal{O}(dnq^2+ dq^3)$ that is linear to $d$ by using TR representation. Similarly, we can also compute \emph{Frobenius norm} $\|\tensor T \|_F = \sqrt{\langle\tensor T, \tensor T \rangle}$ in the TR representation.

In summary, by using TR representations, many important multilinear operations can be performed by operations on their cores with smaller sizes,  resulting in that the computational complexity scales linearly to the tensor order.

\section{Relation to Other Models}
\label{sec:relation}
In this section, we discuss the relations between TR model and the classical tensor decompositions including CPD, Tucker and TT models.  All these tensor decompositions can be viewed as the transformed representation of a given tensor.  The number of parameters in CPD is $\mathcal{O}(dnr)$ that is linear to tensor order, however, its optimization problem is difficult and convergence is slow. The Tucker model is stable and can approximate an arbitrary tensor as close as possible, however, its number of parameters is  $\mathcal{O}(dnr+r^d)$ that is exponential to tensor order.  In contrast,  TT and TR decompositions have similar representation power to Tucker model, while their  number of paramters is $\mathcal{O}(dnr^2)$ that is linear to tensor order.

It should be noted that (i) TR model has a more generalized and powerful representation ability than TT model, due to  relaxation of the  strict condition $r_1=r_{d+1}=1$ in TT.  In fact, TT decomposition can be viewed as a special case of TR model, as demonstrated in Sec.~\ref{sec:TT}. (ii) TR-ranks are usually smaller than TT-ranks because TR model can be represented as a linear combination of TT decompositions whose cores are partially shared. (iii) TR model is more flexible than TT, because TR-ranks can be equally distributed in the cores, but TT-ranks have a relatively fixed pattern, i.e., smaller in border cores and larger in middle cores. (iv) Another important advantage of TR model over TT model is the circular dimensional permutation invariance (see Theorem \ref{theorem:invariance}). In contrast, the sequential  multilinear products of cores in TT must follow a strict order such that the optimized TT cores highly depend on permutation of the original tensor.

\subsection{CP decomposition}
The cannonical polyadic decomposition (CPD) aims to represent a $d$th-order tensor $\tensor T$ by a sum of rank-one tensors, given by
\begin{equation}
\tensor T = \sum_{\alpha =1}^r \mat u^{(1)}_\alpha \circ \cdots \circ \mat u^{(d)}_\alpha,
\end{equation}
where each rank-one tensor is represented by an outer product of $d$ vectors. It can be also written in the element-wise form given by
\begin{equation}
\label{eq:CPD2}
T(i_1, \ldots, i_d) = \left\langle  \mat u^{(1)}_{i_1}, \ldots, \mat u^{(d)}_{i_d}  \right\rangle,
\end{equation}
where $\langle\cdot,\ldots,\cdot \rangle$ denotes an inner product of a set of vectors, i.e., $\mat u^{(k)}_{i_k}\in\mathbb{R}^{r}, k=1,\ldots,d$.

By defining $\mat V_k(i_k) = \text{diag}(\mat u^{(k)}_{i_k})$ which is a diagonal matrix for each fixed $i_k$ and $k$, where $k=1,\ldots,d$, $i_k=1,\ldots,n_k$, we can rewrite (\ref{eq:CPD2}) as
\begin{equation}
T(i_1,\ldots,i_d) = \text{Tr}(\mat V_1(i_1) \mat V_2(i_2)\cdots \mat V_d(i_d)).
\end{equation}
Hence, CPD can be viewed as a special case of TR decomposition $\tensor T =\Re(\tensor V_1,\ldots, \tensor V_d)$ where the cores $\tensor V_k, k=1,\ldots, d$ are of size $r\times n_k \times r$ and each lateral slice matrix $\mat V_k(i_k)$ is a diagonal matrix of size $r\times r$.

\subsection{Tucker decomposition}
The Tucker decomposition aims to represent a $d$th-order tensor $\tensor T$ by a multilinear product between a core tensor $\tensor G\in\mathbb{R}^{r_1\times \cdots\times r_d}$ and factor matrices $\mat U^{(k)}\in\mathbb{R}^{n_k\times r_k}, k=1,\ldots,d$, which is expressed by
\begin{equation}
\label{eq:tucker}
\tensor T = \tensor G \times_1 \mat U^{(1)}\times_2\cdots\times_d \mat U^{(d)} = [\![ \tensor G, \mat U^{(1)}, \ldots,\mat U^{(d)}]\!].
\end{equation}

By assuming the core tensor $\tensor G$ can be represented by a TR decomposition $\tensor G = \Re(\tensor V_1, \ldots, \tensor V_d)$, the Tucker decomposition (\ref{eq:tucker})  in the element-wise form can be rewritten as

\begin{equation}
\begin{split}
&T(i_1,\ldots, i_d) \\
 &\quad= \Re(\tensor V_1, \ldots, \tensor V_d)\times_1 \mat u^{(1)T}(i_1)\times_2\cdots\times_d \mat u^{(d)T}(i_d)\\
&\quad= \text{Tr}\left\{ \prod_{k=1}^d \left( \sum_{\alpha_k=1}^{r_k} \mat V_k(\alpha_k)u^{(k)}(i_k,\alpha_k)\right) \right\}\\
&\quad = \text{Tr}\left\{ \prod_{k=1}^d   \left( \tensor V_k \times_2 \mat u^{(k)T}(i_k) \right)  \right\},
\end{split}
\end{equation}
where the second step is derived by applying Theorem \ref{theorem:TRtimesvectors}.
Hence, Tucker model can be  represented as a TR decomposition $\tensor T = \Re(\tensor Z_1, \ldots, \tensor Z_d) $ where the cores  are computed by the multilinear products between TR cores representing $\tensor G$ and the factor matrices, respectively, which is
\begin{equation}
\tensor Z_k = \tensor V_k \times_2 \mat U^{(k)}, \quad k=1,\ldots, d.
\end{equation}

\begin{table*}[htbp]
\renewcommand{\arraystretch}{1.3}
\caption{The functional data $f_1(x),f_2(x),f_3(x)$ is tensorized to 10th-order tensor ($4\times 4\times\ldots\times 4$). In the table, $\epsilon$, $\bar{r}$, $N_p$ denote relative error, average rank, and the total number of parameters, respectively.  }
\label{Tab:Simulation1}
\centering
\resizebox{\textwidth}{!}{
\begin{tabular}{cccccc c cccc c cccc c cccc}
\hline
 \multirow{2}{*}{}  &  \multicolumn{4}{c}{$f_1(x)$} &&  \multicolumn{4}{c}{$f_2(x)$ }  &&  \multicolumn{4}{c}{$f_3(x)$ }  &&  \multicolumn{4}{c}{$f_1(x)+\mathcal{N}(0,\sigma), SNR=60dB$ }\\
\cline{2-5} \cline{7-10} \cline{12-15} \cline{17-20}
& $\epsilon$ & $\bar{r}$ & $N_p$ &  Time (s)  & & $\epsilon$ & $\bar{r}$ & $N_p$ &  Time (s) && $\epsilon$ & $\bar{r}$ & $N_p$ &  Time (s) && $\epsilon$ & $\bar{r}$ & $N_p$ &  Time (s)\\
\hline
 TT-SVD & 3e-4 & 4.4 & 1032 & 0.17 &&  3e-4 & 5 & 1360 & 0.16  &&  3e-4 & 3.7 & 680 & 0.16  &&  1e-3 & 16.6 & 13064 & 0.5\\
 TR-SVD &  3e-4 & 4.4 & 1032 & 0.17  &&  3e-4 & 5 & 1360 & 0.28  &&  5e-4 & 3.6 & 668 & 0.15  &&  1e-3 & 9.7 & 4644 & 0.4\\
 TR-ALS  & 3e-4 & 4.4 & 1032 & 13.2   &&  3e-4 & 5 & 1360 & 18.6 &&  8e-4 & 3.6 & 668 & 4.0  &&  1e-3 & 4.4 & 1032 & 11.8 \\
  TR-ALSRA & 7e-4 & 5 & 1312 & 85.15 &&  2e-3 & 4.9 & 1448 & 150.5  &&  4e-3 & 4.6 & 1296 & 170.0  &&  1e-3 & 4.5 & 1100 & 64.8\\
  TR-BALS & 9e-4 & 4.3 & 1052 & 4.6 &&  8e-4 & 4.9 & 1324 & 5.7 &&  5e-4 & 3.7 & 728 & 3.4  &&  1e-3 & 4.2 & 1000 & 6.1\\
\hline
\end{tabular}
}
\end{table*}

\subsection{TT decomposition}
\label{sec:TT}
The tensor train decomposition aims to represent a $d$th-order tensor $\tensor T$ by a sequence of cores $\tensor G_k, k=1,\ldots, d$, where the first core $\mat G_1\in\mathbb{R}^{n_1\times r_2}$ and the last core $\mat G_d\in\mathbb{R}^{r_{d}\times n_d}$ are matrices while the other cores $\tensor G_k \in\mathbb{R}^{r_{k}\times n_k\times r_{k+1}}, k=2,\ldots,d-1$ are 3rd-order tensors. Specifically, TT decomposition in the element-wise form is expressed as
\begin{equation}
T(i_1,\ldots,i_d) = \mat g_1(i_1)^T \mat G_2(i_2)\cdots \mat G_{d-1}(i_{d-1}) \mat g_d(i_d),
\end{equation}
where $\mat g_1(i_1)$ is the $i_1$th row vector of $\mat G_1$, $\mat g_d(i_d)$ is the $i_d$th column vector of $\mat G_d$, and $\mat G_k(i_k), k=2,\ldots, d-1$ are the $i_k$th lateral slice matrices of $\tensor G_k$.

According to the definition of TR decomposition in (\ref{eq:TRD1}), it is obvious that TT decomposition is a special case of TR decomposition where the first and the last cores are matrices, i.e., $r_1=r_{d+1}=1$. On the other hand, TR decomposition can be also rewritten as
\begin{equation}
\begin{split}
&T(i_1,\ldots, i_d) =\text{Tr}\left\{\mat Z_1(i_1)\mat Z_2(i_2)\cdots \mat Z_d(i_d)\right\} \\
&= \sum_{\alpha_1 =1}^{r_1} \mat z_1(\alpha_1, i_1,:)^T \mat Z_2(i_2)\cdots\mat Z_{d-1}(i_{d-1})\mat z_d(:,i_d,\alpha_1)
\end{split}
\end{equation}
where $\mat z_1(\alpha_1, i_1,:) \in\mathbb{R}^{r_2}$ is the  $\alpha_1$th row vector of the matrix $\mat Z_1(i_1)$ and $\mat z_d(:,i_d,\alpha_1)$ is the $\alpha_1$th column vector of the matrix $\mat Z_d(i_d)$. Therefore, TR decomposition can be interpreted as a sum of TT representations. The number of TT representations is $r_1$ and these TT representations have the common cores $\tensor Z_k, \text{ for } k=2,\ldots, d-1$. In general, TR outperforms TT in terms of  representation power due to the fact of linear combinations of a group of TT representations. Furthermore, given a specific  approximation  level, TR representation requires smaller ranks than TT representation.

\section{Experimental Results}
\label{sec:experiment}
In the following section we investigate the performance of the proposed TR model and algorithms, while also comparing it with the existing tensor decomposition models.  We firstly conducted several numerical experiments on synthetic data to evaluate the effectiveness of our algorithms. Subsequently, two real-world datasets were employed to invesigate the representation power of TR model together with classification performances.
All computations were performed by a windows workstation with 3.33GHz Intel(R) Xeon(R) CPU, 64G memory and MATLAB R2012b.

\subsection{Numerical experiments}
\begin{figure}
  \centering
  \includegraphics[width=1\columnwidth]{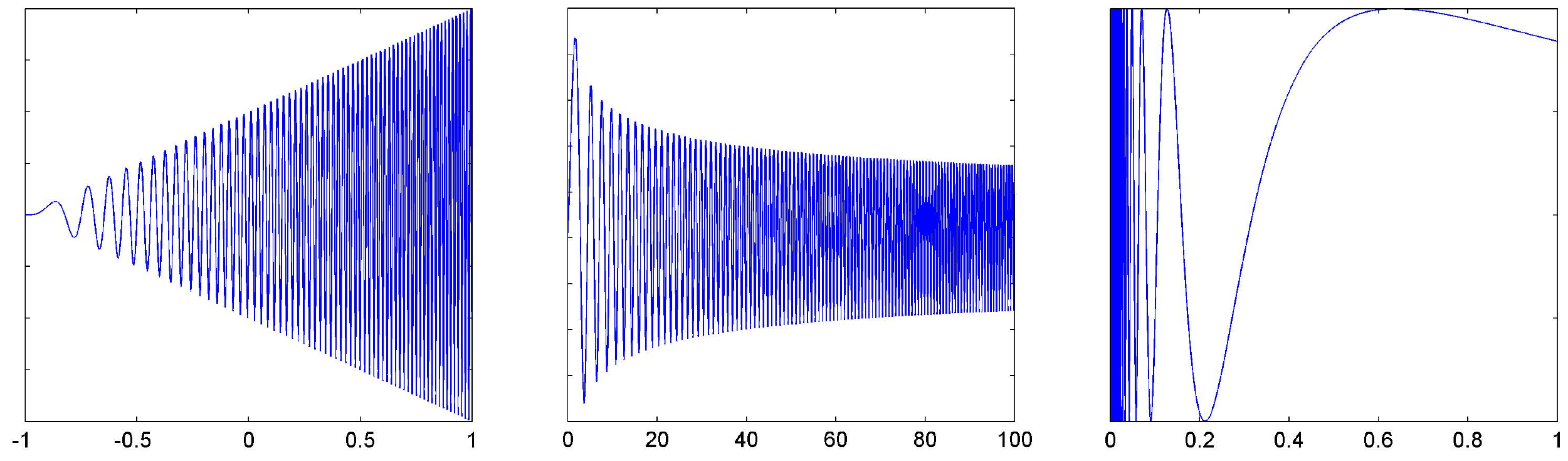}\\
  \caption{Highly oscillated functions. The left panel is $f_1(x)=(x+1)sin(100(x+1)^2)$. The middle panel is  Airy function: $f_2(x)=x^{-\frac{1}{4}} sin(\frac{2}{3}x^{\frac{3}{2}})$. The right panel is Chirp function $f_3(x)=sin\frac{x}{4}cos(x^2)$.  }
  \label{fig:functions}
\end{figure}

In the first example, we consider highly oscillating functions that can be approximated well by a low-rank TT format~\cite{khoromskij2015tensor,khoromskij2015fast}, as shown in Fig.~\ref{fig:functions}. We firstly apply the reshaping operation to the functional vector resulting in a $d$th-order tensor of size $n_1\times n_2\times\cdots\times n_d $, where isometric size is usually preferred, i.e., $n_1=n_2=\cdots=n_d = n$, with the total number of elements denoted by $N=n^d$. This operation is also termed as \emph{tensorization} or \emph{n-folding representation} of the functional vector. We tested the TR model on these functional data by using four proposed algorithms (i.e., TR-SVD, TR-ALS, TR-ALSAR, TR-BALS) and compared with TT-SVD algorithm in terms of relative error $\epsilon$, average rank $\bar{r}$, total number of parameters $N_p$ and running time.  In addition to three functional data, we also performed simulations on $f_1(x)$ when Gaussian noise was considered. We specify the error bound (tolerance), denoted by $\epsilon_p=10^{-3}$, as the stopping criterion for all compared algorithms.  These algorithms  can determine either TT-ranks or TR-ranks according to relative error tolerance $\epsilon_p$ except TR-ALS that requires manually given TR-ranks. We use TR-ranks obtained from TR-SVD algorithm as the setting of TR-ALS algorithm. Thus, we can compare the difference between TR approximations with orthogonal cores and non-orthogonal cores. As shown in Table~\ref{Tab:Simulation1}, TR-SVD and TT-SVD obtain comparable results including relative error, average rank and number of parameters for approximation of $f_1(x),f_2(x),f_3(x)$, while TR-SVD outperforms TT-SVD on $f_1(x)$ with Gaussian noise, which requires much smaller average TR-ranks. This result indicates that TR-SVD can represent such data by smaller number of parameters $N_p$ and effectively prevent overfitting the noise. The running time of TR-SVD and TT-SVD are comparable in all simulations. Although TR-ALS does not impose orthogonality constraints on cores, it can also achieve the same relative error with TR-SVD for approximation of $f_1(x),f_2(x),f_3(x)$, while its running time is longer than TR-SVD due to the iterative procedure. When Gaussian noise was considered, TR-ALS using fixed TR-ranks can effectively avoid overfitting to noise. Both TR-ALSAR and TR-BALS can adjust TR-ranks automatically based on error bound $\epsilon_p$, however, TR-BALS outperforms TR-ALSAR in terms of relative error, average rank and running time. TR-BALS can obtain comparable results with TR-SVD for approximation of $f_1(x),f_2(x),f_3(x)$ and significantly outperforms TR-SVD when noise is involved. More detailed comparisons of these algorithms can be found in Table~\ref{Tab:Simulation1}.

\begin{table*}[htbp]
\renewcommand{\arraystretch}{1.3}
\caption{The results under different shifts of dimensions on functional data $f_2(x)$ with error bound setting to $10^{-3}$ . For the 10th-order tensor, all 9 dimension shifts were considered and the average rank $\bar{r}$ as well as the number of total parameters $N_p$ are compared. }
\label{tab:Simulation2}
\centering
\begin{tabular}{c ccccccccc c ccccccccc}
\hline
\multirow{2}{*}{} & \multicolumn{9}{c}{$\bar{r}$} && \multicolumn{9}{c}{$N_p$} \\
 \cline{2-10} \cline{12-20}
 & 1 & 2 & 3 &  4 & 5 & 6 & 7 & 8 & 9 &&  1 & 2 & 3 &  4 & 5 & 6 & 7 & 8 & 9\\
 \hline
 TT-SVD & 5.2 & 5.8 & 6 &  6.2 & 7 & 7 & 8.5 & 14.6 & 8.4 &&  1512 & 1944 & 2084 &  2144 & 2732 & 2328 & 3088 & 10376 & 3312\\
  TR-SVD & 5.2 & 5.8 & 5.9 &  6.2 & 9.6 & 10 & 14 & 12.7 & 6.5 &&  1512 & 1944 & 2064 &  2144 & 4804 & 4224 & 9424 & 7728 & 2080\\
   TR-ALS & 5 & 5 & 5 &  5 & 5 & 5 & 5 & 5 & 5 &&  1360 & 1360 & 1360 &  1360 & 1360 & 1360 & 1360 & 1360 & 1360\\
    TR-ALSAR & 5.5 & 6.6 & 6.2 &  5.2 & 5.3 & 5.8 & 6.9 & 5.3 & 4.7 &&  1828 & 2064 & 1788 &  1544 & 1556 & 1864 & 2832 & 1600 & 1324\\
     TR-BALS & 5 & 4.9 & 5 &  4.9 & 4.9 & 5 & 5 & 4.8 & 4.9 &&  1384 & 1324 & 1384 &  1348 & 1348 & 1384 & 1384 & 1272 & 1324\\
\hline
\end{tabular}
\end{table*}

It should be noted that TT representation has the property that $r_1=r_{d+1}=1$ and $r_k, k=2,\ldots,d-1$ are bounded by the rank of $k$-unfolding matrix of $\mat T_{\langle k\rangle}$, which limits its generalization ability and consistence when the dimensions of tensor have been shifted or permutated.  To demonstrate this, we consider to shift the dimensions of $\tensor T$ of size ${n_1\times \cdots\times n_d}$ by $k$ times leading to $\overleftarrow{\tensor{T}}^k$ of size $n_{k+1}\times \cdots\times n_d\times n_1\times\cdots\times n_{k}$. We applied TT and TR decompositions to the dimension shifted data generated by $f_2(x)$, and compared different algorithms under all situations when $k=1,\ldots,9$. As shown in Table~\ref{tab:Simulation2}, the average TT-ranks are varied dramatically along with the different shifts. In particular, when $k=8$, $\bar{r}_{tt}$ becomes 14.6, resulting in the large number of parameters $N_p=10376$, which implies a very low compression ability. In contrast to TT decomposition, TR-SVD achieves slightly better performance in some cases but not for all cases. For TR-ALS, since TR-ranks were specified as $(r_{k+1},\ldots,r_d,r_1,\ldots,r_k)$ for any $k$ shifted cases, it achieves the consistent results. However, the TR-ranks are usually unknown in practice, we must resort to TR-ALSAR and TR-BALS that can adapt TR-ranks automatically based on the error tolerance. As compared with TR-ALSAR, TR-BALS can obtain more compact representation together with smaller relative errors. In addition, TR-BALS can even outperform TR-ALS in several cases, implying that TR-ranks obtained from TR-SVD are not always the optimal one. More detailed results can be found in Table~\ref{tab:Simulation2}. These experiments demonstrate that TR decomposition is stable, flexible and effective for general data, while TT decomposition has strict limitations on data organization. Therefore, we can conclude that TR model is a more generalized and powerful representation with higher compression ability as compared to TT.

\begin{table*}[htbp]
\renewcommand{\arraystretch}{1.3}
\caption{The detailed results on synthetic data of size ($n_1\times n_2\times\ldots n_d$) where $n_1=\cdots=n_d=4, d=10, \mathbf{r}_{true}=\{1,\ldots,4\}$. In the table, $\epsilon$ denotes relative error, $r_{max}$ is the maximum rank, and $N_p$ denotes the number of total parameters (i.e., model complexity).  'N/A' denotes that rank is specified manually as the true rank.  }
\label{Tab:Simulation3}
\centering
\begin{tabular}{ccccccc c cccc}
\hline
\multirow{2}{*}{$\mathbf{r}_{true}$}  & \multirow{2}{*}{Methods}  &  \multicolumn{4}{c}{Without Noise} &&  \multicolumn{4}{c}{Gaussian Noise (SNR=40dB)} \\
\cline{3-6} \cline{8-11}
  &   & $\epsilon$ & $r_{max}$ & $N_p$ &  Time (s)  & & $\epsilon$ & $r_{max}$ & $N_p$  &  Time (s)  \\
\hline
\multirow{5}{*}{1}
& TT-SVD & 8e-15 & 1 & 40 & 0.12 &&  8e-3 & 361 & 5e5 & 0.9\\
 & TR-SVD &  7e-15 & 1 & 40 & 0.14  &&  8e-3 & 323 & 6e5 & 1.2\\
 & TR-ALS  & 3e-14 & N/A & 40 & 1.10   &&  1e-2 & N/A & 40 & 1.0 \\
 & TR-ALSAR & 3e-14 & 1 & 40 & 2.29 &&  1e-2 & 1 & 40 & 2.2\\
 & TR-BALS & 6e-15 & 1 & 40 & 0.95 &&  1e-2 & 1 & 40 & 1.0\\
\hline
\multirow{5}{*}{2}
  & TT-SVD & 2e-14 & 4 & 544 & 0.24 &&  8e-3 & 362 & 5e5 & 0.9\\
 & TR-SVD &  2e-14 & 8 & 1936 & 0.49 &&  7e-3 & 324 & 6e5 & 1.2\\
 & TR-ALS  & 2e-7 & N/A & 160 & 3.1 &&  1e-2 & N/A & 160 & 2.5\\
 & TR-ALSAR & 2e-5 & 4 & 284 & 5.70 &&  1e-2 & 4 & 320 & 6.1\\
 & TR-BALS & 2e-5 & 2 & 160 & 1.17 &&  1e-2 & 2 & 160 & 1.2\\
\hline
\multirow{5}{*}{3}
 & TT-SVD & 2e-14 & 9 & 2264 & 0.30 &&  8e-3 & 364 & 5e5 & 1.0\\
 & TR-SVD &  2e-14 & 18 & 7776 & 0.55 &&  7e-3 & 325 & 6e5 & 1.2\\
 & TR-ALS  & 9e-6 & N/A & 360 & 5.39 &&  1e-2 & N/A & 360 & 4.7\\
 & TR-ALSAR & 8e-4 & 4 & 580 & 10.66 &&  1e-2 & 4 & 608 & 11.8\\
 & TR-BALS & 9e-4 & 3 & 360 & 2.24 &&  1e-2 & 3 & 360 & 2.4\\
\hline
\multirow{5}{*}{4}
  & TT-SVD & 2e-14 & 16 & 6688 & 0.45 &&  7e-3 & 395 & 6e5 & 1.2\\
 & TR-SVD &  4e-4 & 32 & 22352 & 0.66 &&  7e-3 & 335 & 7e5 & 1.2\\
 & TR-ALS  & 1e-6 & N/A & 640 & 14.00 &&  1e-2 & N/A & 640 & 11.4\\
 & TR-ALSAR & 8e-4 & 7 & 1084 & 30.99 && 1e-2 & 7 & 1252 & 35.9\\
 & TR-BALS & 2e-4 & 4 & 640 & 3.90 &&  1e-2 & 4 & 640 & 3.9\\
\hline
\end{tabular}
\end{table*}

In the next experiment, we consider  higher order tensors which are known to be represented well by TR model. We simplify the TR-ranks as $r_1=r_2=\cdots=r_d$ that are varied from 1 to 4, $n_1=n_2=\cdots=n_d=4$ and $d=10$. The cores, $\tensor G_k, (k=1,\ldots,d)$, were drawn from the normal distribution, which are thus used to generate  a 10th-order tensor. We firstly apply different algorithms with the setting of $\epsilon_p=10^{-3}$ to $\tensor T$ generated by using different ranks. Subsequently, we also consider Gaussian noise corrupted tensor $\tensor T+\mathcal{N}(0,\sigma^2)$ with SNR=40dB and apply these algorithms with the setting of $\epsilon_p=10^{-2}$. As shown in Table~\ref{Tab:Simulation3}, the maximum rank of TT-SVD  increases dramatically when the true rank becomes larger and is approximately $r^2_{true}$, which thus results in a large number of parameters $N_p$ (i.e., low compression ability). TR-SVD performs similarly to TT-SVD, which also shows low compression ability when the true rank is high. For TR-ALS, since the true rank is given manually, it shows the best result and can be used as the baseline to evaluate the other TR algorithms. In contrast to TT-SVD and TR-SVD, both TR-ALSAR and TR-BALS are able to adapt TR-ranks according to $\epsilon_p$, resulting in the significantly lower rank reflected by $r_{max}$ and lower model complexity reflected by $N_p$.  As compared to TR-BALS, TR-ALSAR is prone to overestimate the rank and computation cost is relatively high. The experimental results show that TR-BALS can learn the TR-ranks correctly in all cases, and the number of parameters $N_p$ are exactly equivalent to the baseline, meanwhile, the running time is also reasonable. For the noisy tensor data, we observe that TT-SVD and TR-SVD are affected significantly with $r_{max}$ becoming 361 and 323 when true rank is only 1, which thus results in a poor compression ability. This indicates that TT-SVD and TR-SVD are sensitive to noise and prone to overfitting problems. By contrast, TR-ALS, TR-ALSAR, and TR-BALS obtain impressive results that are similar to that in noise free cases. TR-ALSAR slightly overestimates the TR-ranks. It should be noted that TR-BALS can estimate the true rank correctly and obtain the best compression ratio as TR-ALS given true rank. In addition, TR-BALS is more computationally efficient than TR-ALSAR. In summary, TT-SVD and TR-SVD have limitations for representing the tensor data with symmetric ranks, and this problem becomes more severe when noise is considered. The ALS algorithm can avoid this problem due to the flexibility on distribution of ranks. More detailed results can be found in Table~\ref{Tab:Simulation3}.

\subsection{COIL-100 dataset}
\begin{figure}[h]
  \centering
  \includegraphics[width=1\columnwidth]{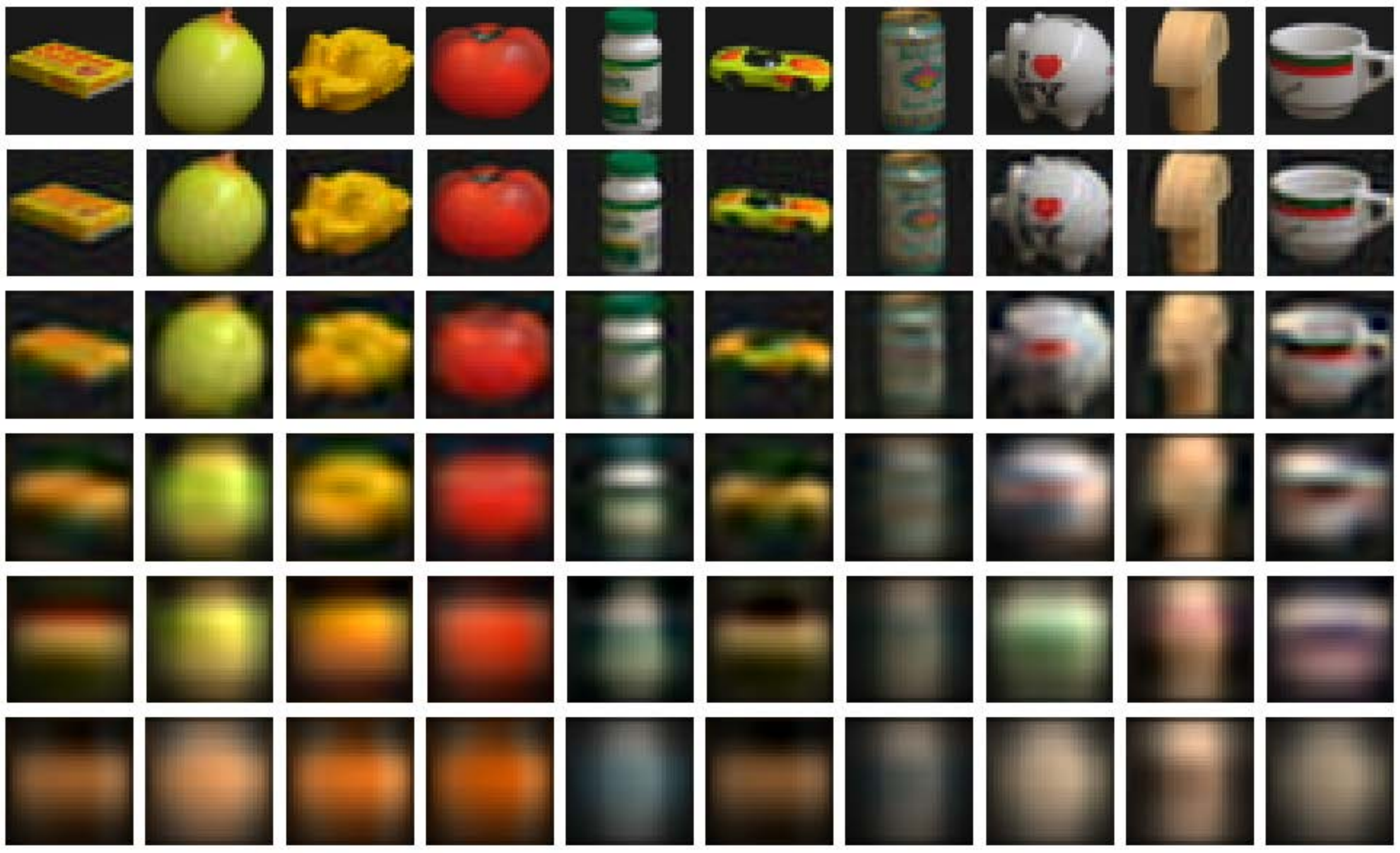}\\
  \caption{ The reconstruction of Coil-100 dataset by using TRSVD. The top row shows the original images, while the reconstructed images are shown from the second to sixth rows corresponding to $\epsilon$=0.1, 0.2, 0.3, 0.4, 0.5, respectively.}
  \label{fig:TRSVD-Coil100}
\end{figure}

\begin{table}[h]
\renewcommand{\arraystretch}{1.1}
\caption{The comparisons of different algorithms on Coil-100 dataset. $\epsilon$, $r_{max}$,$\bar{r}$ denote relative error, the maximum rank and the average rank, respectively. For classification task, the ratio of training samples $\rho=50 \%, 10\%$ were considered. }
\label{tab:coil100}
\centering
\begin{tabular}{c c c c c c c}
\hline
  & $\epsilon$ & $r_{max}$ & $\bar{r}$ & \pbox{1in} {Acc. (\%) \\($\rho=50 \%$)} & \pbox{1in} {Acc. (\%)\\ ($\rho=10 \%$)} \\
\hline
\multirow{4}{*}{ CP-ALS}
& 0.20 & 70 & 70 & 97.46  & 80.03 \\
& 0.30 & 17 & 17 & 97.56  & 83.38 \\
& 0.39 & 5 & 5 & 90.40  & 77.70 \\
& 0.47 & 2 & 2 & 45.05  & 39.10 \\
\hline
\multirow{5}{*} {TT-SVD}
 & 0.19 & 67  & 47.3 & 99.05  & 89.11  \\
 & 0.28 & 23  & 16.3 & 98.99  & 88.45  \\
 & 0.37 & 8   & 6.3  & 96.29  & 86.02  \\
 & 0.46 & 3   & 2.7  & 47.78  & 44.00  \\
 \hline
\multirow{5}{*} {TR-SVD}
 & 0.19 & 23  & 12.0 & 99.14  & 89.29  \\
 & 0.28 & 10  & 6.0  & 99.19 & 89.89  \\
 & 0.36 & 5   & 3.5  & 98.51  & 88.10  \\
 & 0.43 & 3   & 2.3  & 83.43  & 73.20 \\
 \hline
\multirow{4}{*} {TR-ALS}
 & 0.20 & 23 & 12.0 & 95.10  & 70.38  \\
 & 0.30 & 10 & 6.0 & 97.32  & 80.71  \\
 & 0.40 & 5 & 3.5 & 95.77  & 79.92  \\
 & 0.47 & 3 & 2.3 & 65.73  & 52.25  \\
 \hline
\multirow{3}{*} {TR-ALSRA}
 & 0.20 & 51 & 16.2 & 79.49  & 60.18\\
 & 0.30 & 11 & 6 & 93.11  & 79.48  \\
 & 0.39 & 4 & 3 & 83.67   & 66.50  \\
 & 0.47 & 2 & 2 & 76.02  & 62.79  \\
 \hline
\multirow{2}{*} {TR-BALS}
 & 0.19 & 31 & 11 & 96.00 & 72.75\\
 & 0.29 & 13 & 7 & 96.41   & 74.32 \\
 & 0.40 & 4 & 2   & 94.68  & 84.22  \\
 & 0.45 & 2 & 1.5 & 88.30  & 76.86  \\
\hline
\end{tabular}
\end{table}

In this section, the proposed TR algorithms are evaluated and compared with TT and CP decompositions on Columbia Object Image Libraries (COIL)-100 dataset~\cite{nayar1996columbia} that contains 7200 color images of 100 objects (72 images per object) with different reflectance and complex geometric characteristics. Each image can be represented by a 3rd-order tensor of size $128\times 128\times 3$ and then is downsampled to $32\times 32\times 3$. Hence, the dataset can be finally organized as a 4th-order tensor of size $32\times 32\times 3\times 7200$. In Fig.~\ref{fig:TRSVD-Coil100}, we show the reconstructed images under different relative errors $\epsilon\in\{0.1,\ldots,0.5\}$ which correspond to different set of TR-ranks $\mathbf{r}_{TR}$. Obviously, if $\mathbf{r}_{TR}$ are small, the images are smooth and blurred, while the images are more sharp when $\mathbf{r}_{TR}$ are larger. This motivates us to apply TR model to extract the abstract information of objects by using low-dimensional cores, which can be considered as the feature extraction approach. More specifically, the 4th TR core $\tensor G_4$ of size $(r_4 \times 7200 \times r_1)$, can be used as the latent TR features while the subchain $\tensor G^{\neq 4}$ can be considered as the basis of latent subspace. It should be emphasized that the feature extraction by TR decomposition has some essentially different characteristics. The number of features is determined by $r_4\times r_1$, while the flexibility of subspace basis is determined by $r_2, r_3$. This implies that we can obtain very different features even the number of features is fixed, by varying $r_2, r_3$ of TR-ranks that are related to the subspace basis. Therefore, TR decomposition might provide a flexible and effective feature extraction framework. In this experiment, we simply use relative error bound as the criterion to control TR ranks, however, TR ranks are also possible to be controlled individually according to the specific physical meaning of each dimension. We apply the proposed TR algorithms with the setting of $\epsilon_p\in\{0.2,\ldots,0.5\}$, then the core $\tensor G_4$ is used as extracted features with reduced dimensions. Subsequently, we apply the K-nearest neighbor (KNN) classifier with K=1 for classification. For detailed comparisons, we randomly select a certain ratio $\rho=50\%$ or $\rho=10\%$ samples as the training set and the rest as the test set. The classification performance is averaged over 10 times of randomly splitting. In Table \ref{tab:coil100}, we show the relative error, maximum rank, average rank and classification accuracy under two different settings. We can see that $r_{max}$ of TT-SVD is mostly smaller than that of CP-ALS when their $\epsilon$ is in the same level, while $r_{max}$ of TR decompositions are much smaller than TT-SVD and CP-ALS. This indicates that TR model can outperform TT and CP models in terms of compression ratio. The best classification performance of CP-ALS is 97.56\% ($\rho=50\%$) and 83.38\% ($\rho=10\%$), which corresponds to $\epsilon=0.3$. TT-SVD achieves the classification performance of 99.05\% ($\rho=50\%$) and $89.11\%$ ($\rho=10\%$) when $\epsilon\approx 0.2$. TR-SVD can achieve 99.19\% ($\rho=50\%$) and $89.89\%$ ($\rho=10\%$) when $\epsilon \approx 0.3$. It should be noted that,  TR model, as compared to TT, can preserve the discriminant information well even when the fitting error is larger.  For TR-ALS, TR-ALSAR, and TR-BALS, $r_{max}$ is comparable to that of TT-SVD under the corresponding $\epsilon$, while the classification performances are slightly worse than TR-SVD. More detailed results are compared in Table \ref{tab:coil100}. This experiment demonstrates that TR and TT decompositions are effective for feature extractions and outperform the CP decomposition. In addition, TR decompositions achieve the best classification performances together with the best compression ability as compared to TT and CP decompositions.

\subsection{KTH video dataset}

\begin{figure}[htbp]
  \centering
  \includegraphics[width=1\columnwidth]{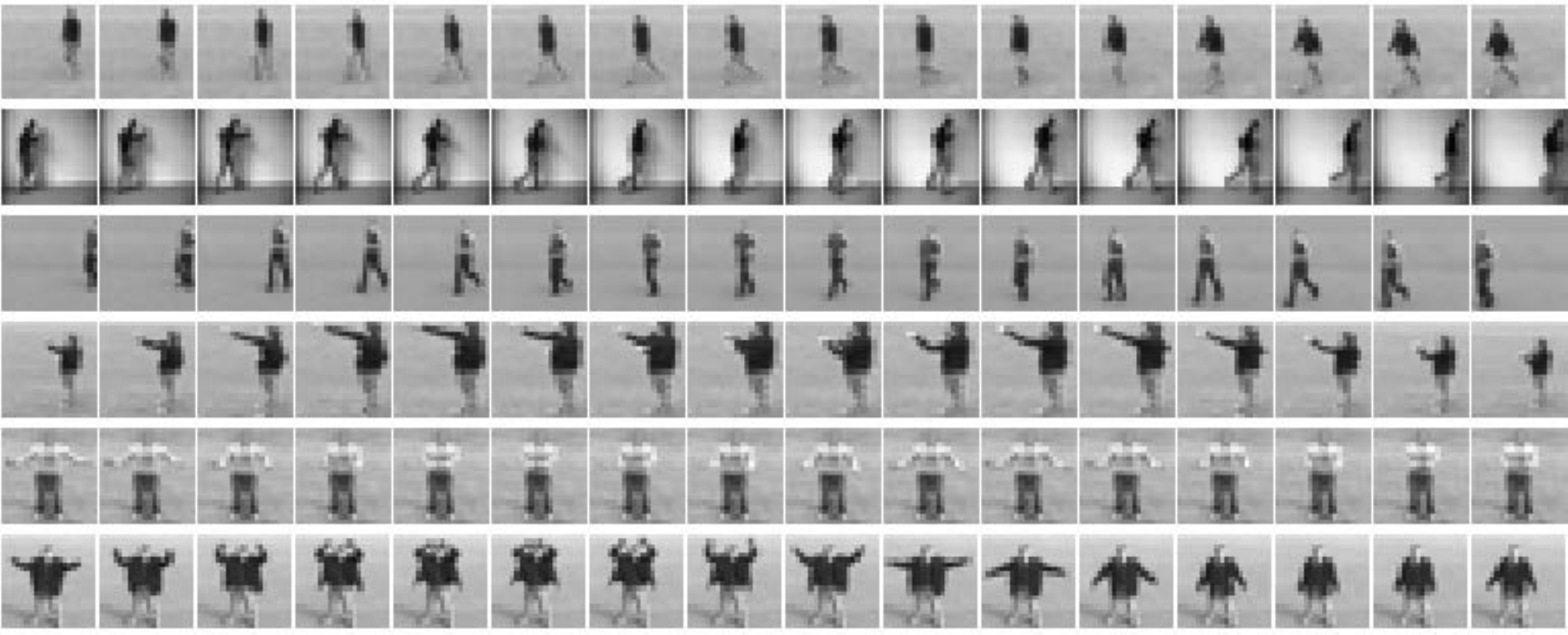}\\
  \caption{Video dataset consists of six types of human actions performed by 25 subjects in four different scenarios. From the top to bottom, six video examples corresponding to each type of actions are shown.  }
  \label{fig:KTHdataset}
\end{figure}

\begin{figure}[htbp]
  \centering
  \includegraphics[width=1\columnwidth]{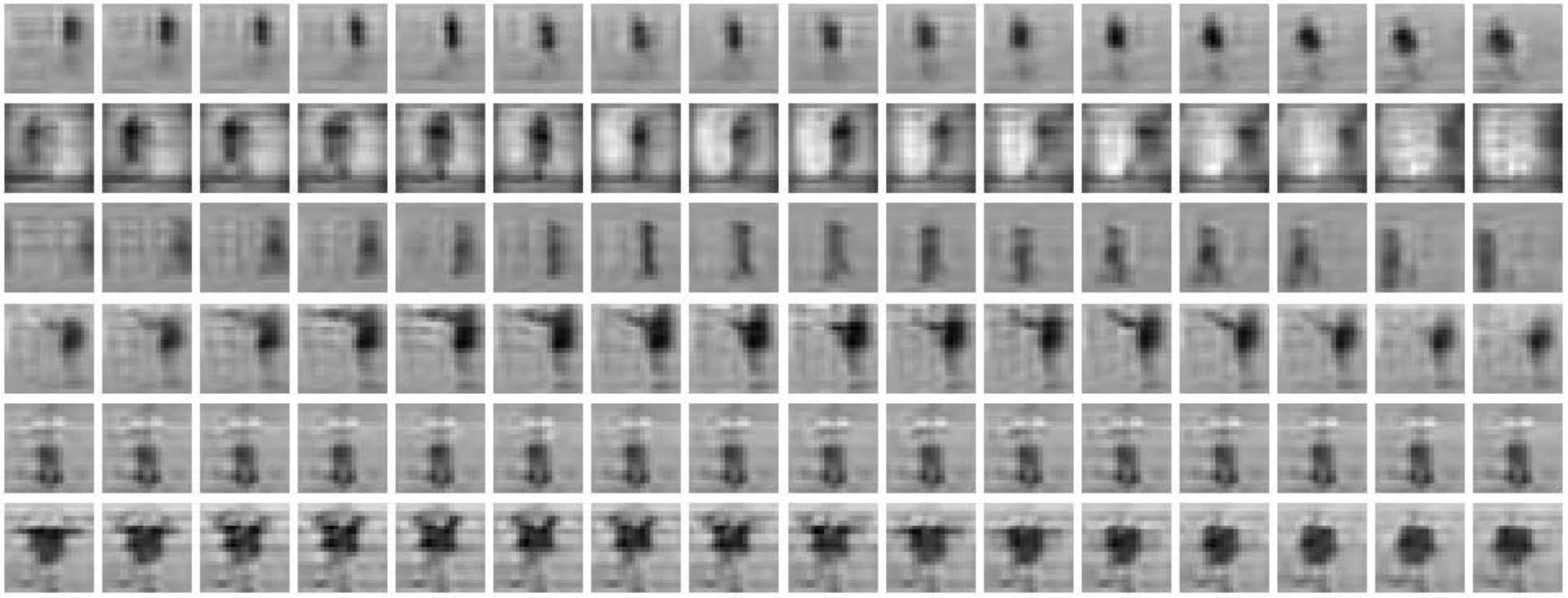}\\
  \caption{The six examples of reconstructed video sequences by TR-BALS with $\epsilon=0.27$, $r_{max}=24$, $\bar{r}=13.3$. The classification accuracy is 87.0\% by using the cores obtained from TR-BALS.   }
  \label{fig:KTHdataset2}
\end{figure}

\begin{table}[h]
\renewcommand{\arraystretch}{1.1}
\caption{The comparisons of different algorithms on KTH dataset. $\epsilon$ denotes the obtained relative error; $r_{max}$ denotes maximum rank; $\bar{r}$ denotes the average rank; $N_f$ denotes the total number of extracted features, and Acc. is the classification accuracy.  }
\label{tab:KTH}
\centering
\begin{tabular}{p{15mm} c c c c c c}
\hline
 & $\epsilon$ & $r_{max}$ & $\bar{r}$ & $N_f$ & Acc. ($5\times 5$-fold) \\
\hline
\multirow{3}{*}{ CP-ALS}
& 0.20 & 300 & 300 & 300 & 80.8 \% \\
& 0.30 & 40 & 40 & 40 & 79.3 \%\\
& 0.40 & 10 & 10 & 10 & 66.8 \%\\
\hline
\multirow{4}{*} {TT-SVD}
 & 0.20 & 139  & 78.0 & 139  & 84.8 \% \\
 & 0.29 & 38  & 27.3 & 36  & 83.5 \% \\
 & 0.38 & 14   & 9.3  & 9  & 67.8 \% \\
 \hline
\multirow{3}{*} {TR-SVD}
 & 0.20 & 99  & 34.2 & 297  & 78.8 \% \\
 & 0.29 & 27  & 12.0  & 81  & 87.7 \% \\
 & 0.37 & 10   & 5.8  & 18 & 72.4 \% \\
 \hline
\multirow{2}{*} {TR-ALS}
 & 0.30 & 27 & 12.0 & 81 & 87.3 \% \\
  & 0.39 & 10   & 5.8  & 18 & 74.1 \% \\
 \hline
\multirow{2}{*} {TR-ALSRA}
 & 0.29 & 29 & 16.0 & 39 & 82.3 \% \\
 & 0.39 & 8 & 5.3 & 16 & 74.1 \% \\
 \hline
\multirow{2}{*} {TR-BALS}
 & 0.27 & 24 & 13.3   & 100 & 87.0 \% \\
 & 0.40 & 11 & 6 & 44 & 82.9 \% \\
\hline
\end{tabular}
\end{table}

In this section, we test the TR decompositions on KTH video database~\cite{laptev2006local} containing six types of human actions (walking, jogging, running, boxing, hand waving and hand clapping) performed several times by 25 subjects in four different scenarios: outdoors, outdoors with scale variation, outdoors with different clothes and indoors as illustrated in Fig.~\ref{fig:KTHdataset}. There are 600 video sequences for each combination of 25 subjects, 6 actions and 4 scenarios. Each video sequence was downsampled to $20\times 20\times 32$. Finally, we can organize the dataset as a tensor of size $20\times 20\times 32\times 600$. We apply TR decompositions to represent the whole dataset by a set of 3rd-order TR cores, which can be also considered as the feature extraction or dimension reduction approach, and compare with TT and CP decompositions in terms of compression ability and classification performance. For extensive comparisons, we choose different error bound $\epsilon_p\in \{0.2,0.3,0.4\}$ for tensor decompositions.  In Table \ref{tab:KTH}, we can see that TR representations achieve better compression ratio reflected by smaller $r_{max}, \bar{r}$ than that of TT-SVD, while TT-SVD achieves better compression ratio than CP-ALS. For instance, when $\epsilon\approx 0.2$, CP-ALS requires $r_{max}=300$, $\bar{r}=300$; TT-SVD requires $r_{max}=139$, $\bar{r}=78$, while TR-SVD only requires $r_{max}=99$, $\bar{r}=34.2$. For comparisons of different TR algorithms, we observe that TR-BALS outperforms the other algorithms in terms of compression ability. However, TR-ALSRA and TR-BALS cannot approximate data with any given error bound. For classification performance, we observe that the best accuracy ($5\times 5$-fold cross validation) achieved by CP-ALS, TT-SVD, TR-SVD, TR-ALS, TR-ALSAR, TR-BALS are 80.8\%, 84.8\%, 87.7\%, 87.3\%, 82.3\%, 87.0\%, respectively. Note that these classification performances might not  be the state-of-the-art on this dataset, however, we mainly focus on the comparisons among CP, TT, and TR decomposition frameworks. To obtain the best performance, we may apply the specific supervised feature extraction methods to TT or TR representations of dataset. It should be noted that TR decompositions achieve the best classification accuracy when $\epsilon= 0.3$, while TT-SVD and CP-ALS achieve their best classification accuracy when $\epsilon=0.2$. This indicates that TR decomposition can preserve more discriminant information even when the approximation error is relatively high. Fig.~\ref{fig:KTHdataset2} illustrates the reconstructed video sequences by TR-BALS, which corresponds to its best classification accuracy. Observe that although the videos are blurred and smooth, the discriminative information for action classification is still preserved. The detailed results can be found in Table \ref{tab:KTH}.   This experiment demonstrates that TR decompositions are effective for unsupervised feature representation due to their flexibility of TR-ranks and high compression ability.

\section{Conclusion}
\label{sec:conclusion}
We have proposed a tensor decomposition model, which provides an efficient representation for a large-dimensional tensor by a sequence of low-dimensional cores. The number of parameters is $\mathcal{O}(dnr^2)$ that scales linearly to the tensor order. To optimize the latent cores, we have presented four different algorithms. In particular, TR-SVD is a non-recursive algorithm that is stable and efficient. TR-ALS is precise but requires TR-ranks to be given mannually. TR-ALSAR and TR-BALS can adapt TR-ranks automatically with relatively high computational cost. Furthermore, we have investigated  the properties on how the basic multilinear algebra can be performed efficiently by direct operations over TR representations (i.e., cores), which provides a potentially powerful framework for processing large-scale data. The relations to other tensor decomposition models are also investigated, which allows us to conveniently transform the latent representations from the traditional models to TR model. The experimental results have verified the effectiveness of the proposed TR model and algorithms.


\bibliographystyle{IEEEtran}
\bibliography{IEEEabrv,TensorNetwork}

\begin{IEEEbiography}[{\includegraphics[width=1in,height=1.25in,clip,keepaspectratio]{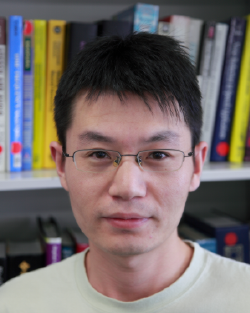}}]{Qibin Zhao} received the Ph.D. degree from Department of Computer Science and Engineering, Shanghai Jiao Tong University, Shanghai, China, in 2009. He is currently a research scientist at Laboratory for Advanced Brain Signal Processing in RIKEN Brain Science Institute, Japan and is also a visiting professor in Saitama Institute of Technology, Japan. His research interests include machine learning, tensor factorization, computer vision and brain computer interface. He has published more than 50 papers in international journals and conferences.
\end{IEEEbiography}
\vspace{-0in}

\begin{IEEEbiography}[{\includegraphics[width=1in,height=1.25in,clip,keepaspectratio]{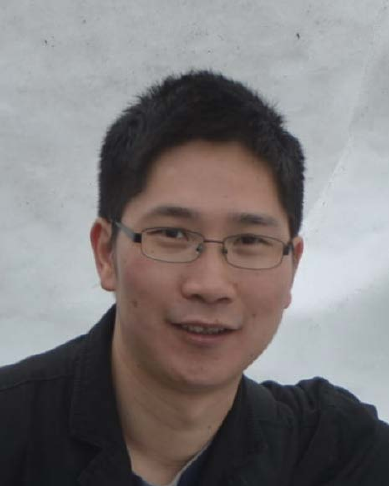}}]{Guoxu Zhou} received the Ph.D degree in intelligent signal and information processing from South China University of Technology, Guangzhou, China, in 2010. He is currently a research scientist of the laboratory for Advanced Brain Signal Processing, at RIKEN Brain Science Institute (JAPAN). His research interests include statistical signal processing, tensor analysis, intelligent information processing, and machine learning.
\end{IEEEbiography}

\vspace{-0in}
\begin{IEEEbiography}[{\includegraphics[width=1in,height=1.25in,clip,keepaspectratio]{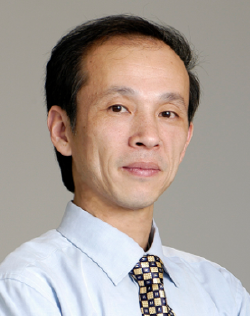}}]{Liqing Zhang} received the Ph.D. degree from Zhongshan University, Guangzhou, China, in 1988. He is now a Professor with Department of Computer Science and Engineering, Shanghai Jiao Tong University, Shanghai, China. His current research interests cover computational theory for cortical networks, visual perception and computational cognition, statistical learning and inference. He has published more than 210 papers in international journals and conferences.
\end{IEEEbiography}

\vspace{-0in}
\begin{IEEEbiography}[{\includegraphics[width=1in,height=1.25in,clip,keepaspectratio]{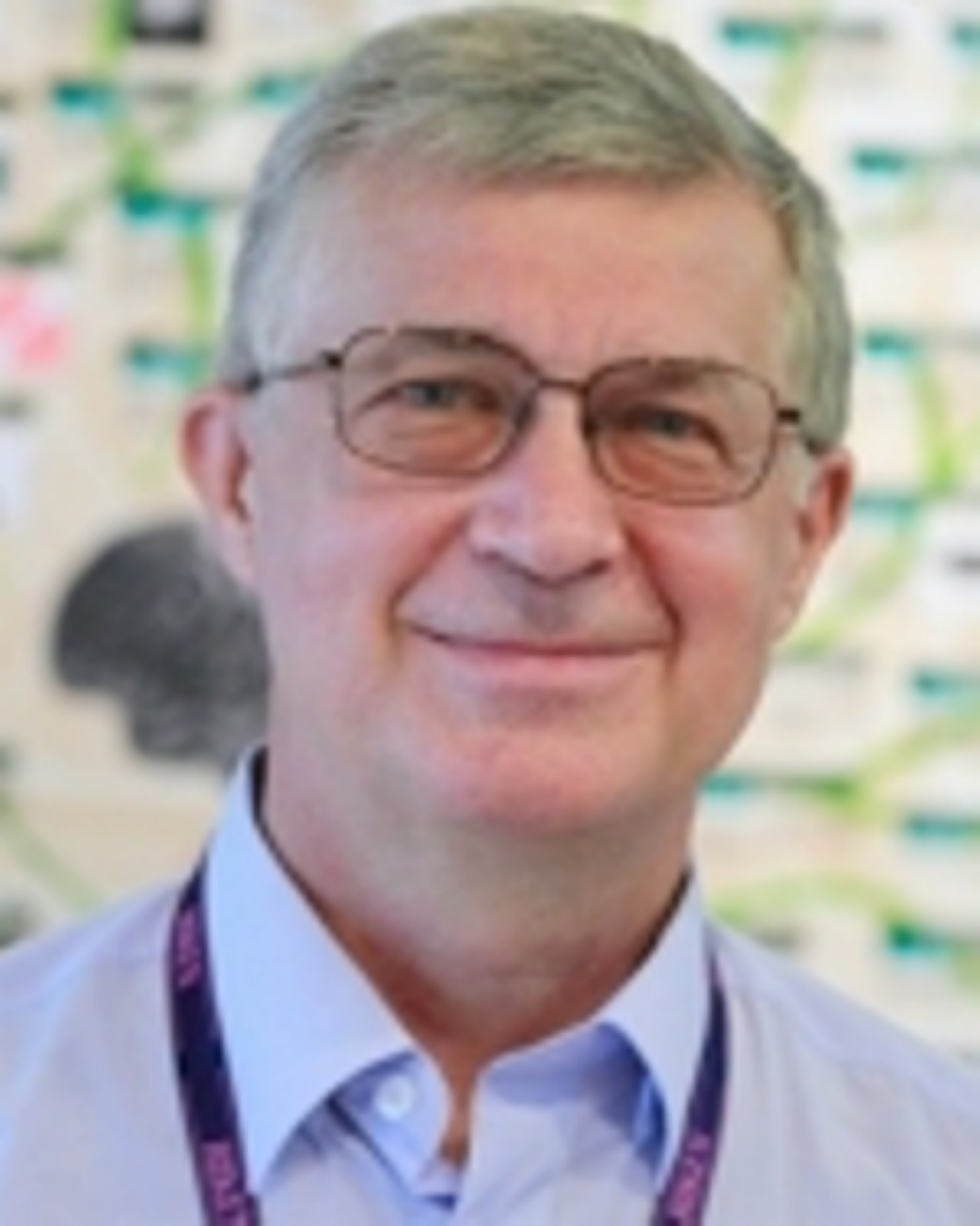}}]{Andrzej Cichocki} received the Ph.D. and Dr.Sc. (Habilitation) degrees, all in electrical engineering, from Warsaw University of Technology (Poland). He is the senior team leader of the Laboratory for Advanced Brain Signal Processing, at RIKEN BSI (Japan). He is coauthor of more than 400 scientific papers and 4 monographs (two of them translated to Chinese). He served as AE of IEEE Trans. on Signal Processing, TNNLS, Cybernetics and J. of Neuroscience Methods.
\end{IEEEbiography}
\vfill


\end{document}